\newcommand \BB{\mathcal B}
\newcommand \HH{\mathcal H}
\newcommand\B{\text{B-}}
\newtheorem{thm}{Theorem}[section]
\newtheorem{mydef}[thm]{Definition}
\newtheorem{lem}[thm]{Lemma}
\newtheorem{prop}[thm]{Proposition}
\newtheorem{prob}[thm]{Problem}
\newtheorem{obs}[thm]{Observation}
\newtheorem*{claim}{Claim}
\begin{document}

\title{Ramsey-type results on parameters related to domination }
\author{Jin Sun$^a$, \quad Xinmin Hou$^{a,b,c}$\footnote{\text{e-mail: jinsun@mail.ustc.edu.cn (J. Sun), xmhou@ustc.edu.cn (X. Hou)}}\\
	\small $^{a}$ School of Mathematical Sciences\\
	\small University of Science and Technology of China, Hefei, Anhui 230026, China.\\
	\small  $^{b}$ CAS Key Laboratory of Wu Wen-Tsun Mathematics\\
	\small University of Science and Technology of China, Hefei, Anhui 230026, China.\\
	\small$^c$ Hefei National Laboratory,\\
	\small University of Science and Technology of China, Hefei 230088, Anhui, China
}
\date{}
\maketitle
\begin{abstract}
The following  inequality chain $$		ir(G)\le \gamma(G)\le i(G)\le \alpha(G) \le \Gamma(G) \le I\!R(G)$$
is known as a domination chain, where $ir(G), \gamma(G), i(G), \alpha(G), \Gamma(G)$, and $I\!R(G)$ are the lower irredundance number, the domination number, the independence domination number, the independence number, the upper domination number and the upper irredundance number of $G$, respectively.
The Ramsey-type problem seeks  to characterize the family $\HH$ of graphs such that every $\HH$-free graph $G$ has a bounded parameter $\mu$. The classical Ramsey's theorem states that every $\{K_n, E_n\}$-free graph has a bounded number of vertices. Furuya (Discrete Math.Theor 2018) characterized $\HH$ such that every connected $\HH$-free graph $G$ has a bounded domination number. The characterization  of the graph family $\HH$ for which every connected $\HH$-free graph $G$ has a bounded independence number was due to Choi, Furuya, Kim, Park~(Discrete math. 2020) and Chiba, Furuya (Electron. J. Combin., 2022).  In this paper, we further characterize $\HH$  such that every connected $\HH$-free graph $G$ has bounded  $\mu(G)$ for $\mu$ belonging to the set $\{ir(G), i(G), \Gamma(G), \text{IR}(G)\}$. This completes the characterization of $\HH$ for which every connected $\HH$-free graph $G$ has bounded $\mu(G)$ for $\mu(G)$ along the domination chain. 
Additionally, we characterize $\HH$ such that every connected $\HH$-free graph $G$ has bounded $\mu(G)$ for $\mu$ related to the domination number.  Specifically, we consider the parameters  $O\!I\!R(G)$, $I\!S(G)$, or $I\!R\!S(G)\}$, where $O\!I\!R(G)$, $I\!S(G)$, and $I\!R\!S(G)$ are the open irredundance number, the independence saturation number, and the irredundance saturation number of  graph $G$, respectively.  
				
\end{abstract}
\textbf{Keywords:} {Ramsey-type problem, domination number, domination chain}

\section{Introduction}
In this paper, we consider only finite and simple graphs. Let $G=(V,E)$ be a graph. For a vertex $v\in V$, let $N_G(v)=\{u\in V: uv\in E\}$ denote the {\it (open) neighborhood} of $v$, and let $N_G[v]=N(v)\cup \{v\}$ denote the {\it closed neighborhood} of $v$ (the subscript may be omitted if there is no confusion). For a subset $S\subseteq V$, let $N_G[S]=\cup_{v\in S}N_G[v]$.
We denote by $G[S]$ the subgraph of $G$ induced by $S$. A set $S\subseteq V$ is called a {\it stable set} (or an {\it independent set}) of $G$ if there is no edge in $G[S]$. 
The more notation, see \cite{d}.	

For graphs $H_1$ and $H_2$, we say $H_1\prec H_2$ if $H_1$ is an induced subgraph of $H_2$. Let $\HH$ be a family of graphs, we say $G$ is $\HH$-free, if there is no graph $H\in \HH$ such that $H\prec G$. 
For graph families $\HH_{1}$ and $\HH_2$, we say $\HH_1\le \HH_2 $ if for any $H_2\in \HH_2$ there exists $H_1\in \HH_1$ such that $ H_1\prec H_2$, i.e., each graph in $\HH_2$ is not $\HH_1$-free. 
The following straightforward result will be commonly used without explicit mention.
\begin{obs}
	The relation `$\le$' is transitive. Therefore, if $\mathcal{H}_1\le \mathcal{H}_2$, then every $\mathcal{H}_1$-free graph is also $\mathcal{H}_2$-free. 
\end{obs}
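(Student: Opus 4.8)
The plan is to reduce both assertions to a single elementary structural fact: the induced-subgraph relation $\prec$ is itself transitive. So first I would record this underlying fact. If $H_1\prec H_2$ and $H_2\prec H_3$, then there is a subset $S\subseteq V(H_2)$ with $H_2[S]\cong H_1$ and an embedding of $H_2$ onto some $T\subseteq V(H_3)$ with $H_3[T]\cong H_2$; composing these two identifications picks out a subset $U\subseteq V(H_3)$ whose induced subgraph $H_3[U]$ is isomorphic to $H_1$, so $H_1\prec H_3$. This is the only structural ingredient; everything that follows is logical bookkeeping over the quantifiers in the definition of $\le$.

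For transitivity of $\le$, I would assume $\mathcal{H}_1\le\mathcal{H}_2$ and $\mathcal{H}_2\le\mathcal{H}_3$ and fix an arbitrary $H_3\in\mathcal{H}_3$. Unwinding the definition, $\mathcal{H}_2\le\mathcal{H}_3$ supplies some $H_2\in\mathcal{H}_2$ with $H_2\prec H_3$, and then $\mathcal{H}_1\le\mathcal{H}_2$ supplies some $H_1\in\mathcal{H}_1$ with $H_1\prec H_2$. Transitivity of $\prec$ yields $H_1\prec H_3$, and since $H_3$ was an arbitrary member of $\mathcal{H}_3$, this is precisely the statement $\mathcal{H}_1\le\mathcal{H}_3$.

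For the second assertion I would argue by contraposition. Let $G$ be $\mathcal{H}_1$-free and suppose, toward a contradiction, that $G$ is not $\mathcal{H}_2$-free, so that some $H_2\in\mathcal{H}_2$ satisfies $H_2\prec G$. Because $\mathcal{H}_1\le\mathcal{H}_2$, there is an $H_1\in\mathcal{H}_1$ with $H_1\prec H_2$, and transitivity of $\prec$ again forces $H_1\prec G$. This contradicts the assumption that $G$ is $\mathcal{H}_1$-free, so $G$ must in fact be $\mathcal{H}_2$-free.

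There is no genuine obstacle in this observation; the only point that requires care is to state and invoke the transitivity of $\prec$ explicitly, since both parts rest entirely on it, and to chain the two hypotheses in the correct order so that the ``for every $H_3$ there exists $H_1$'' quantifier structure of $\le$ is respected rather than inverted.
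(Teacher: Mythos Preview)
Your argument is correct. The paper does not actually supply a proof of this observation---it is stated as a ``straightforward result [to be] commonly used without explicit mention''---so there is nothing to compare against; your explicit derivation from the transitivity of $\prec$ is exactly the routine verification the authors are taking for granted.
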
 

As usual, let $P_n,\, C_n,\, K_n$ and $E_n$ denote a {\em path}, a {\em cycle}, a {\em complete graph},  and an {\em empty graph} of order $n$, respectively.
Let $K_{s,t}$ be the {\em complete bipartite graph} with partitions of orders $s$ and $t$.  
When $s=1$, we call  $K_{1,t}$ a {\em star}.  

The classical Ramsey's Theorem can be stated as follows: 

(A) (Ramsey's Theorem~\cite{r}, 1929) For a family $\HH$ of graphs, there is a constant $c=c(\HH)$ such that $|V(G)|< c$ for every $\HH$-free graph $G$ if and only if $\HH \le \{K_m,E_n\}$ for some positive integers $m, n$;

(B) (The connected version, Proposition 9.4.1 in \cite{d}) For a family $\HH$ of graphs, there is a constant $c=c(\HH)$ such that $|V(G)|< c$ for every connected $\HH$-free graph $G$ if and only if $\HH \le \{K_n, K_{1,n}, P_n\}$ for some positive integer $n$. 

The {\em Ramsey number} $R(m,n)$ is the least integer $c=c(\{K_m, E_n\})$ in (A), i.e., every graph $G$ of order at least $R(m,n)$ contains either a complete graph $K_m$ or an empty graph $E_n$.  

In general, given a graph parameter $\mu$, define
\[
B\text{-}\mu=\{\HH: \text{there is constant } c \text{ such that } \mu(G)<c \text{  for any connected } \HH \text{-free graph } G\}.
\] 
A Ramsey-type problem of $\mu$ is to determine $B$-$\mu$.
\begin{prob}\label{PROB: mu}
	Given a graph parameter $\mu$, determine $\B\mu$.	
\end{prob}

In the following, we list more results on different parameters of \cref{PROB: mu}. 
\begin{description}

	\item[(1)] Chiba and Furuya  determined $\B\mu$ when $\mu$ is the path cover/partition number in \cite{cf22}, and when $\mu$ is the induced star and path cover/partition number in \cite{cf23}.
	
	\item[(2)] Choi, Furuya, Kim and Park \cite{cfkp20} determined $\B\nu$, where $\nu(G)$ is the matching number of $G$.
	
	\item[(3)] Lozin \cite{l17} determined $\B\mu$ when $\mu$ is the neighborhood diversity or VC-dimension.

	\item[(4)] Lozin and Razgon \cite{lr22} determined $\B\mu$, where $\mu(G)$ is the tree-width of graph $G$.
	
	\item[(5)] Kierstead and Penrice \cite{kp}; and Scott, Seymour, and Spirkl \cite{sss23} determined $\B\delta$, where $\delta(G)$ is the minimal degree of graph $G$.
	
	\item[(6)] Galvin, Rival and Sands \cite{grs}; and Atminas, Lozin and Razgon \cite{alr} determined $\B\mu$, where $\mu(G)$ is the length of the longest path of graph $G$.
	
	\item[(7)] Sun, and Hou~\cite{SH23} determined $\B\mu$, where $\mu(G)$ is the deficiency of graph $G$.
\end{description}

Let $nG$ be the graph consisting of $n$ disjoint copies of $G$. In order to state the forbidden subgraphs condition conveniently, we introduce additional kinds of graphs (as shown in~\cref{fig 1}). 

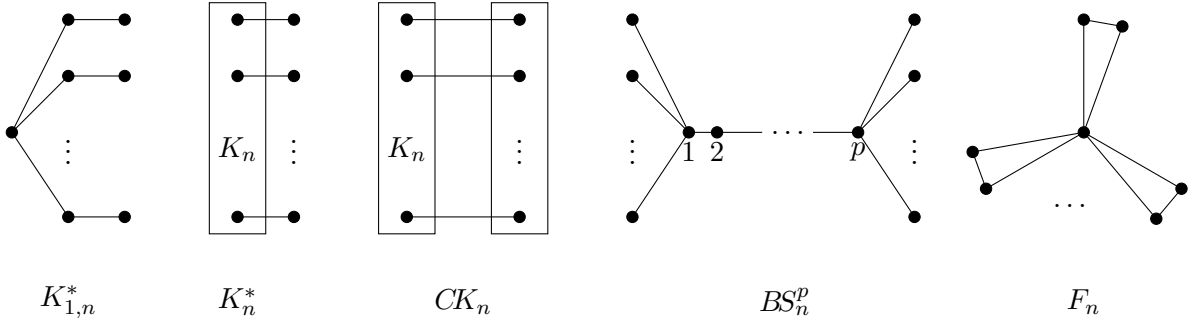
\begin{figure}
	\centering
	\begin{tikzpicture}[baseline=10pt,scale=0.75]
		\filldraw[fill=black,draw=black] (0,0) circle (0.1);
		\filldraw[fill=black,draw=black] (1,2) circle (0.1);
		\filldraw[fill=black,draw=black] (1,1) circle (0.1);
		\node at (1,-0.2) {$\vdots$};
		\filldraw[fill=black,draw=black] (1,-1.5) circle (0.1);
		\filldraw[fill=black,draw=black] (2,2) circle (0.1);
		\filldraw[fill=black,draw=black] (2,1) circle (0.1);
		\filldraw[fill=black,draw=black] (2,-1.5) circle (0.1);
		\draw (0,0) -- (1,2);
		\draw (0,0) -- (1,1);
		\draw (0,0) -- (1,-1.5);
		\draw (2,2) -- (1,2);
		\draw (2,1) -- (1,1);
		\draw (2,-1.5) -- (1,-1.5);
		\node at (1,-3) {$K_{1,n}^*$};
		
		\draw (3.5,-1.8) rectangle (4.5,2.3);
		\filldraw[fill=black,draw=black] (4,-1.5) circle (0.1);
		\filldraw[fill=black,draw=black] (4,1) circle (0.1);
		\filldraw[fill=black,draw=black] (4,2) circle (0.1);
		\node at (5,-0.2) {$\vdots$};
		\node at (4,-0.3) {$K_n$};
		\filldraw[fill=black,draw=black] (5,-1.5) circle (0.1);
		\filldraw[fill=black,draw=black] (5,1) circle (0.1);
		\filldraw[fill=black,draw=black] (5,2) circle (0.1);
		\draw (4,-1.5) -- (5,-1.5);
		\draw (4,1) -- (5,1);
		\draw (4,2) -- (5,2);
		\node at (4,-3) {$K_{n}^*$};
		
		\draw (7.5,-1.8) rectangle (6.5,2.3);
		\draw (8.5,-1.8) rectangle (9.5,2.3);
		\filldraw[fill=black,draw=black] (7,-1.5) circle (0.1);
		\filldraw[fill=black,draw=black] (9,-1.5) circle (0.1);
		\filldraw[fill=black,draw=black] (7,1) circle (0.1);
		\filldraw[fill=black,draw=black] (9,1) circle (0.1);
		\filldraw[fill=black,draw=black] (7,2) circle (0.1);
		\filldraw[fill=black,draw=black] (9,2) circle (0.1);
		\draw (7,-1.5) -- (9,-1.5);
		\draw (7,1) -- (9,1);
		\draw (7,2) -- (9,2);
		\node at (9,-0.2) {$\vdots$};
		\node at (7,-0.3) {$K_n$};
		\node at (8,-3) {$C\!K_n$};

		\begin{scope}[shift={(-0.5,0)}]
			\filldraw[fill=black,draw=black] (11.5,-1.5) circle (0.1);
			\filldraw[fill=black,draw=black] (11.5,1) circle (0.1);
			\filldraw[fill=black,draw=black] (11.5,2) circle (0.1);
			\filldraw[fill=black,draw=black] (12.5,0) circle (0.1);
			\node at (11.5,-0.2) {$\vdots$};
			
			\filldraw[fill=black,draw=black] (16.5,-1.5) circle (0.1);
			\filldraw[fill=black,draw=black] (16.5,1) circle (0.1);
			\filldraw[fill=black,draw=black] (16.5,2) circle (0.1);
			\filldraw[fill=black,draw=black] (15.5,0) circle (0.1);
			\node at (16.5,-0.2) {$\vdots$};
			
			\filldraw[fill=black,draw=black] (13,0)node[below]{$2$} circle (0.1);
			\draw (11.5,-1.5) -- (12.5,0);
			\draw (11.5,1) -- (12.5,0);
			\draw (11.5,2) -- (12.5,0);
			\draw(12.5,0)node[below]{$1$} -- (13.8,0);
			\draw(14.7,0) -- (15.5,0);
			\draw (16.5,-1.5) -- (15.5,0)node[below]{$p$};
			\draw (16.5,1) -- (15.5,0);
			\draw (16.5,2) -- (15.5,0);
			
			\node at (14.3,0) {$\dots$};
			\node at (14.2,-3) {$B\!S_n^p$};	
		\end{scope}
		
		\filldraw[shift ={(19,0)}][fill=black,draw=black] (0:0) circle (0.1);
		\filldraw[shift ={(19,0)}][fill=black,draw=black] (90:2) circle (0.1);
		\filldraw[shift ={(19,0)}][fill=black,draw=black] (70:2) circle (0.1);
		\filldraw[shift ={(19,0)}][fill=black,draw=black] (210:2) circle (0.1);
		\filldraw[shift ={(19,0)}][fill=black,draw=black] (190:2) circle (0.1);
		\filldraw[shift ={(19,0)}][fill=black,draw=black] (330:2) circle (0.1);
		\filldraw[shift ={(19,0)}][fill=black,draw=black] (310:2) circle (0.1);
		\node at (18.8,-1.3) {$\dots$};
		\node at (19,-3){$F_n$};
		\draw[shift ={(19,0)}] (0:0)--(90:2)--(70:2)--cycle;
		\draw [shift ={(19,0)}](0:0)--(210:2)--(190:2)--cycle;
		\draw[shift ={(19,0)}] (0:0)--(330:2)--(310:2)--cycle;
	\end{tikzpicture}
	\caption{The graphs $K_{1,n}^*,\, K_n^*,\, C\!K_n,\,B\!S_n^p$ and $F_n$.}
	\label{fig 1}
\end{figure}

\begin{itemize}
	
	\item $K_{1,n}^{*} $: The graph obtained by adding a pendant to every leaf of $K_{1,n}$.
	\item $K_n^{*}$: The graph obtained by adding a pendant to every vertex of $K_n$.
	\item $C\!K_n$: The graph obtained by adding a perfect matching between two disjoint copies of $K_n$.
	\item $B\!S_n^p$: The graph obtained by connecting the {\it centers} of two disjoint $K_{1,n}$ by a path $P=P_p$. Specially, when $p=2$, we denote it by $B\!S_n$. We call $B\!S_n^p$ a {\em bistar} and call the two ends of $P_p$ as the centers of  $B\!S_n^p$. 
	\item $F_n$: The graph obtained by adding a universal vertex adjacent to each vertex of $nK_2$.
	
\end{itemize}

In this paper, we mainly concern Problem~\ref{PROB: mu} when $\mu$ are parameters related to domination.

\subsection{The domination chain}
Let $G=(V,E)$ be a graph. For a property $P$ about subsets of $V$, we define a subset $S$ as a {\em minimal} (resp. {\em maximal}) $P$-set if $S$ satisfies $P$, and   no  proper subset (resp. superset)  of $S$ satisfies $P$. 
A set $S\subseteq V$  is called a {\em dominating set} of $G$ 
if $V=N[S]$.
Define the {\em domination number} of $G$ as
 $$\gamma(G)=\min\{|S| : S \text{ is a minimal dominating set of } G\},$$ and 
the {\em upper domination number} of $G$ as
$$\Gamma(G)=\max\{|S| : S \text{ is a minimal dominating set of } G\}.$$
Recall that the {\em independence number} of $G$ 
$$\alpha(G)=\max\{|S| : S \text{ is a maximal independent set of $G$}\}.$$ 
Define the {\em independence domination number} of $G$ as 
$$i(G)=\min\{|S| : S \text{ is a maximal independent set of $G$}\}.$$ 

Let $S\subseteq V(G)$ and $v\in S$. The {\em private neighborhood} of $v$ with respect to $S$ is defined as $P\!N[v,S]=N_G[v]-N_G[S\setminus\{v\}]$. 
A set $S\subseteq V(G)$ is called an {\em irredundant set} if every vertex $s$ in $S$ has at least one private neighbor, i.e., $P\!N[s,S]\neq \emptyset$ for any $s\in S$. 

The {\em upper  irredundance number}, $\text{IR}(G)$, and the {\em lower  irredundance number}, $ir(G)$, of $G$ are  defined as 
$$\mbox{IR}(G)=\max\{|S| : S \text{ is a maximal irredundant set of $G$} \}$$ and 
$$ir(G)=\min\{|S| : S \text{ is a maximal irredundant set of $G$} \}.$$

It can be checked directly that a maximal stable set is a minimal dominating set, and a minimal dominating set is a maximal irredundant set. 
Visually, 
\[\{\mbox{maximal stable set}\}\subseteq \{\mbox{minimal dominating set}\}\subseteq \{\mbox{maximal irredundant set}\}.\]
By the definitions of $ir(G), \gamma(G), i(G), \alpha(G), \Gamma(G)$, and $\text{\it IR}(G)$, we have the following  inequality chain.
\begin{prop}[\cite{hhs}] \label{domination chain}
For any graph $G$, 
\begin{equation}\label{EQ: domination-chain}
	  ir(G)\le \gamma(G)\le i(G)\le \alpha(G) \le \Gamma(G) \le \text{IR}(G).
\end{equation}
\end{prop}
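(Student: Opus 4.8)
The plan is to deduce the entire chain from one elementary monotonicity fact: if $\mathcal{A}\subseteq\mathcal{B}$ are nonempty families of subsets of $V(G)$, then $\min_{S\in\mathcal{B}}|S|\le\min_{S\in\mathcal{A}}|S|$ and $\max_{S\in\mathcal{A}}|S|\le\max_{S\in\mathcal{B}}|S|$, simply because every set competing for an extremum over $\mathcal{A}$ also competes over $\mathcal{B}$. Each of the six parameters in \eqref{EQ: domination-chain} is the minimum or the maximum of $|S|$ taken over one of the three families $\mathcal{S}\subseteq\mathcal{D}\subseteq\mathcal{I}$, where $\mathcal{S}$, $\mathcal{D}$, $\mathcal{I}$ are the families of maximal stable sets, minimal dominating sets, and maximal irredundant sets of $G$; this is precisely the inclusion chain recorded immediately before the statement. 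I would first note that all three families are nonempty (greedily enlarging $\emptyset$ produces a maximal stable set, which then lies in all three families), so every minimum and maximum below is well defined.

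With the monotonicity fact in hand, the four ``crossing'' inequalities fall out by a single application each. Taking minima, $\mathcal{D}\subseteq\mathcal{I}$ gives $ir(G)=\min_{\mathcal{I}}|S|\le\min_{\mathcal{D}}|S|=\gamma(G)$, and $\mathcal{S}\subseteq\mathcal{D}$ gives $\gamma(G)\le i(G)$; taking maxima, $\mathcal{S}\subseteq\mathcal{D}$ gives $\alpha(G)\le\Gamma(G)$, and $\mathcal{D}\subseteq\mathcal{I}$ gives $\Gamma(G)\le\text{IR}(G)$. The middle inequality $i(G)\le\alpha(G)$ is even more immediate, since $i$ and $\alpha$ are the minimum and the maximum of $|S|$ over the \emph{same} family $\mathcal{S}$.

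The only genuine content therefore sits in the two set inclusions $\mathcal{S}\subseteq\mathcal{D}$ and $\mathcal{D}\subseteq\mathcal{I}$, which the excerpt already asserts and which I would take as given; for completeness I would check them directly. For $\mathcal{S}\subseteq\mathcal{D}$: maximality of a stable set $S$ forces each vertex outside $S$ to have a neighbor in $S$, so $S$ dominates $G$, while stability gives $v\notin N[S\setminus\{v\}]$ for each $v\in S$, so $S\setminus\{v\}$ is not dominating and $S$ is minimal. For $\mathcal{D}\subseteq\mathcal{I}$: if $S$ is minimal dominating then for each $v\in S$ some vertex escapes $N[S\setminus\{v\}]$, witnessing $P\!N[v,S]\neq\emptyset$, so $S$ is irredundant; and for any $w\notin S$ we have $P\!N[w,S\cup\{w\}]=N[w]\setminus N[S]=\emptyset$ because $N[S]=V(G)$, so $S\cup\{w\}$ is never irredundant and $S$ is maximal. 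I do not expect a real obstacle: the argument is purely structural, and the only points needing care are verifying \emph{maximality}/\emph{minimality} (not merely the defining property) in each inclusion, plus the trivial nonemptiness remark.
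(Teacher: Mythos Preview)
Your proposal is correct and follows exactly the approach the paper sketches: the paper records the inclusions $\{\text{maximal stable sets}\}\subseteq\{\text{minimal dominating sets}\}\subseteq\{\text{maximal irredundant sets}\}$ and then says the chain follows ``by the definitions,'' which is precisely your min/max monotonicity argument spelled out. You have simply supplied the details (nonemptiness, the verification of minimality/maximality in each inclusion) that the paper leaves implicit in its citation to \cite{hhs}.
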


The inequality chain above 
is called the {\em domination chain}. For the history and more information of these parameters, the reader can survey the book \cite{hhs} and references therein. For parameters $\mu$ defined by size of special subsets of $V(G)$, we will use $\mu$-set to mean subset $S$ realizing parameter $\mu$; for example, a $\gamma$-set $D$ of $G$ is a minimum dominating set of $G$ such that $\gamma(G)=|D|$.

We first focus on Problem~\ref{PROB: mu} for $\mu$ in the domination chain. Furuya \cite{f18} has determined $B$-$\gamma$. 
\begin{thm}[\cite{f18}]\label{domination number}
	$\HH\in B$-$\gamma$  if and only if $\HH \le \{K_{1,n}^*, K_n^*, P_n\}$ for some positive integer $n$.
\end{thm}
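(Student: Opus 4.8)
The plan is to characterize $B\text{-}\gamma$ by proving both directions of the biconditional $\HH\in B\text{-}\gamma \iff \HH\le\{K_{1,n}^*,K_n^*,P_n\}$. For the \emph{sufficiency} direction (that $\HH\le\{K_{1,n}^*,K_n^*,P_n\}$ forces a bounded domination number), the natural strategy is to show that a connected graph $G$ avoiding all three graphs $K_{1,n}^*,K_n^*,P_n$ as induced subgraphs must have $\gamma(G)$ bounded by a function of $n$. By the transitivity of $\le$ (the Observation) it suffices to handle the single family $\{K_{1,n}^*,K_n^*,P_n\}$. First I would use $P_n$-freeness to control distances: a connected $P_n$-free graph has bounded diameter, so $G$ decomposes into a bounded number of distance layers from any fixed vertex. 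The role of $K_n^*$-freeness and $K_{1,n}^*$-freeness is then to bound, within this bounded-diameter structure, how large an ``efficiently spread out'' dominating configuration can be; intuitively, forbidding the pendant-decorated clique $K_n^*$ and the pendant-decorated star $K_{1,n}^*$ prevents $G$ from containing many vertices that each need their own dominator, which is exactly what would force $\gamma(G)$ to be large.

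For the \emph{necessity} direction I would argue contrapositively: if $\HH\not\le\{K_{1,n}^*,K_n^*,P_n\}$ for every $n$, then for each $n$ there is some graph in $\{K_{1,n}^*,K_n^*,P_n\}$ that is $\HH$-free, and I must exhibit connected $\HH$-free graphs with arbitrarily large domination number. The key structural fact to establish is that each of the three families $\{K_{1,n}^*\}_n$, $\{K_n^*\}_n$, $\{P_n\}_n$ is itself a family of connected graphs with unbounded domination number: $K_{1,n}^*$ has $\gamma=n$ (each pendant edge forces one dominator), $K_n^*$ likewise has $\gamma=n$, and $P_n$ has $\gamma=\lceil n/3\rceil$. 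So if, say, every $K_{1,n}^*$ is $\HH$-free, the graphs $K_{1,n}^*$ themselves serve as the required witnesses; the same works for the other two families. The combinatorial heart is to show that failure of $\HH\le\{K_{1,n}^*,K_n^*,P_n\}$ for all $n$ guarantees that at least one of these three families lies entirely among the $\HH$-free graphs.

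To make the last point precise I would analyze what $\HH\le\{K_{1,n}^*,K_n^*,P_n\}$ \emph{failing} for all $n$ means: for each $n$, at least one of the three graphs contains no member of $\HH$ as an induced subgraph. By a pigeonhole argument over the infinitely many values of $n$, one of the three types (star-type $K_{1,n}^*$, clique-type $K_n^*$, or path-type $P_n$) must be $\HH$-free for infinitely many $n$, and since these families are nested in the sense that induced-subgraph containment is monotone in $n$, being $\HH$-free for infinitely many $n$ yields an unbounded $\HH$-free subfamily with unbounded $\gamma$. I expect the main obstacle to be the sufficiency direction, specifically quantifying how $K_{1,n}^*$- and $K_n^*$-freeness bound $\gamma$ inside a bounded-diameter graph; the clean way is probably to take a minimum dominating set $D$, extract a large ``sunflower-like'' or independent-in-a-layer substructure from $D$ using the private-neighbor structure guaranteed by minimality, and then show that a large such substructure forces one of the forbidden induced graphs, giving an explicit bound $\gamma(G)<c(n)$.
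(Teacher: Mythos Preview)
The paper does not prove this theorem; it is quoted from Furuya~\cite{f18} and used as a black box (the constant $\gamma_n$ it supplies is invoked repeatedly in the proofs of Theorems~\ref{THM: main}, \ref{OIR}, \ref{IS}, and \ref{IRS}). There is therefore no in-paper argument to compare your proposal against.

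Two remarks on your outline nonetheless. Your necessity argument via pigeonhole and nesting is correct but needlessly indirect. The paper's ``only if'' arguments for its own theorems (see the proof of Theorem~\ref{THM: main}(2)) proceed in one step: assume $\HH\in B\text{-}\gamma$ with bound $c$, choose a single $n$ large enough that $\gamma(K_{1,n}^*)=\gamma(K_n^*)=n\ge c$ and $\gamma(P_n)=\lceil n/3\rceil\ge c$; then each of these connected graphs is not $\HH$-free, which is precisely the statement $\HH\le\{K_{1,n}^*,K_n^*,P_n\}$. No infinite pigeonhole or monotonicity in $n$ is needed.

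Your sufficiency sketch (bounded diameter from $P_n$-freeness, then extract structure from a minimum dominating set and its private neighbors) is the natural line, but it is only a sketch. The sentence ``a large such substructure forces one of the forbidden induced graphs'' is where the entire content lives, and you have not said how $K_n^*$- and $K_{1,n}^*$-freeness actually interact with the private-neighbor structure to cap $|D|$; a Ramsey-type dichotomy on the private neighbors (clique versus independent set) together with the bounded diameter is one way through, but this is exactly the nontrivial part of Furuya's argument, and the present paper simply defers to~\cite{f18} for it.
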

Denote by $\gamma_n$ the smallest constant depending only on $n$ in \cref{domination number}, i.e., $\gamma(G)<\gamma_n$ for any $\{K_{1,n}^*, K_n^*, P_n\}$-free graph $G$.

Choi, Furuya, Kim, Park~\cite{cfkp20} and Chiba, Furuya \cite {cf22} characterized $B$-$\alpha$.

\begin{thm}[\cite{cfkp20,cf22}]\label{duli}
	$\HH\in B$-$\alpha$  if and only if $\HH \le \{K_{1,n},K_n^*, P_n\} $ for some positive integer $n$. 	
\end{thm}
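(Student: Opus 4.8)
The plan is to prove the two implications separately. The backward (``only if'') direction is a short extremal computation combined with a pigeonhole argument, while the forward (``if'') direction reduces the problem to the already-established bound on the domination number in \cref{domination number}. Throughout I will use that $\le$ is transitive, so that it suffices to treat the family $\{K_{1,n},K_n^*,P_n\}$ itself.

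For necessity, I would argue by contraposition: assume $\HH\not\le\{K_{1,n},K_n^*,P_n\}$ for every positive integer $n$. Unwinding the definition of $\le$, this says that for each $n$ at least one of the three graphs $K_{1,n}$, $K_n^*$, $P_n$ is $\HH$-free. Since this holds for all $n$, the pigeonhole principle forces one of the three families $\{K_{1,n}\}_{n}$, $\{K_n^*\}_{n}$, $\{P_n\}_{n}$ to contain $\HH$-free members for arbitrarily large $n$. Each of these graphs is connected, and their independence numbers are $\alpha(K_{1,n})=n$, $\alpha(K_n^*)=n$, and $\alpha(P_n)=\lceil n/2\rceil$, all tending to infinity. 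In every case we therefore exhibit connected $\HH$-free graphs of unbounded independence number, so $\HH\notin B\text{-}\alpha$, which is the desired contrapositive.

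For sufficiency, assume $\HH\le\{K_{1,n},K_n^*,P_n\}$, so that it is enough to bound $\alpha(G)$ for connected $\{K_{1,n},K_n^*,P_n\}$-free graphs $G$. The idea is to convert $K_{1,n}$-freeness, which is a \emph{local} independence constraint, into a \emph{global} bound by pairing it with a bounded dominating set. First note $K_{1,n}\prec K_{1,n}^*$ (the center together with the $n$ original leaves induces a $K_{1,n}$), so every $K_{1,n}$-free graph is $K_{1,n}^*$-free; hence $G$ is $\{K_{1,n}^*,K_n^*,P_n\}$-free and \cref{domination number} supplies a $\gamma$-set $D$ with $|D|<\gamma_n$. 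Now let $S$ be a maximum independent set. As $D$ dominates $V(G)$, we have $S\subseteq N[D]=\bigcup_{d\in D}N[d]$, and for each $d$ the set $S\cap N(d)$ is independent inside $N(d)$. If $|S\cap N(d)|\ge n$ held, then $d$ together with these $n$ pairwise non-adjacent neighbors would induce a $K_{1,n}$, contradicting $K_{1,n}$-freeness; thus $|S\cap N[d]|\le n$ for each $d$. Summing over $D$ yields $\alpha(G)=|S|\le n|D|<n\gamma_n$, a bound depending only on $n$.

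The main obstacle is concentrated entirely in the sufficiency direction, and specifically in its reliance on \cref{domination number}: once a dominating set of bounded size is in hand, the passage from independence to a per-vertex local count is routine. The only genuinely new ingredients are the observation $K_{1,n}\prec K_{1,n}^*$, which lets one substitute the plain star into the domination characterization, and the identification of $K_{1,n}$-freeness with the statement that every neighborhood has independence number at most $n-1$. If one instead insisted on a self-contained argument that does not quote \cref{domination number}, the difficulty would migrate wholesale to re-deriving the domination bound, which is the substantive part of the whole program.
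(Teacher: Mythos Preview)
The paper does not give its own proof of this theorem; it is quoted from \cite{cfkp20,cf22} and used as a black box (the constant $\alpha_n$ is simply named after the statement). So there is nothing in the present paper to compare your argument against.

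That said, your proof is correct. For necessity you exhibit, via pigeonhole, connected $\HH$-free graphs among $K_{1,n}$, $K_n^*$, $P_n$ with unbounded independence number; this is exactly the template the paper uses for its own ``only if'' halves (e.g.\ in the proof of \cref{THM: main}(2)). For sufficiency your reduction is the natural one and is new relative to the paper's toolkit: observe $K_{1,n}\prec K_{1,n}^*$ so that \cref{domination number} applies and yields a dominating set $D$ with $|D|<\gamma_n$; then $K_{1,n}$-freeness caps $|S\cap N[d]|\le n$ for every $d\in D$, giving $\alpha(G)<n\gamma_n$. This is a clean self-contained derivation from results already in the paper, and in fact shows one may take $\alpha_n\le n\gamma_n$. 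One cosmetic remark: you have the labels ``backward/only if'' and ``forward/if'' reversed relative to the usual convention, though the arguments under each heading are the right ones.
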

Denote by $\alpha_n$ the smallest constant depending only on $n$ in Theorem~\ref{duli}, i.e., $\alpha(G)<\alpha_n$ for any $\{K_{1,n}, K_n^*, P_n\}$-free graph $G$.

We proceed to determine $B\text{-}\mu$ for $\mu$ belonging to the set $\{ir(G), i(G), \Gamma(G), \text{IR}(G)\}$. This completes the characterization of $B$-$\mu$ for $\mu$ along the domination chain~(\ref{EQ: domination-chain}). 
\begin{thm}\label{THM: main}
	\begin{itemize}
		\item[(1)] $\HH\in $  $B$-$ir$  if and only if $\HH \le \{K_{1,n}^*, K_n^*, P_n\}$ for some positive integer $n$.	
		\item [(2)] 	$\HH\in B$-$i$  if and only if $\HH \le \{K_{1,n}^*, K_n^*, P_n, K_{n,n}, B\!S_n\}$ for some positive integer $n$.	
		\item[(3)] $\HH\in B$-$\text{IR}$ if and only if $\HH\in B$-$\Gamma$ if and only if $\HH \le \{K_{1,n}, K_n^*, P_n, C\!K_n\}$ for some positive integer $n$.
	\end{itemize}

\end{thm}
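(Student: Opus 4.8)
The plan is to prove each of the three parts by separating a sufficiency direction (an upper bound on the parameter for connected graphs free of the listed graphs) and a necessity direction (explicit families witnessing unboundedness whenever $\HH\not\le\{\dots\}$). For Parts (2) and (3) the necessity is uniform. If $\HH\not\le\{\dots\}$ for every $n$, then for each $n$ at least one listed graph is $\HH$-free; since each listed type is nested under $\prec$ (e.g.\ $C\!K_m\prec C\!K_n$, $K_{m,m}\prec K_{n,n}$, $B\!S_m\prec B\!S_n$ for $m\le n$) and there are finitely many types, pigeonhole forces one fixed type to be $\HH$-free for \emph{all} $n$. As each such graph is connected, it suffices to record that the parameter grows: $i(P_n),\Gamma(P_n)\to\infty$; $i(K_n^*),\Gamma(K_n^*)\ge n$; $i(K_{1,n}^*)\ge n$; $\Gamma(K_{1,n})\ge n$ (the leaf set is minimal dominating); $\Gamma(C\!K_n)\ge n$ (one side of the doubled clique is a minimal dominating set); and, crucially, $i(K_{n,n})=n$, $i(B\!S_n)\ge n$ while $\gamma(K_{n,n})=\gamma(B\!S_n)=2$. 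The last fact is exactly why $K_{n,n}$ and $B\!S_n$ must be forbidden for $i$ but not for $\gamma$.

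Part (1) needs no construction. By \cref{domination chain}, $ir\le\gamma$, and by the classical inequality $\gamma(G)\le 2\,ir(G)-1$ for graphs without isolated vertices (see \cite{hhs}), $\gamma\le 2\,ir-1$. Hence on any class of connected graphs $ir$ is bounded iff $\gamma$ is, so $B$-$ir=B$-$\gamma$, and \cref{domination number} finishes the proof.

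For Part (3) I would establish $B$-$\Gamma=B$-$\text{IR}$ by sandwiching through the common family rather than by a direct $\Gamma$--$\text{IR}$ inequality: I prove the sufficiency bound for the larger parameter $\text{IR}$ and the necessity constructions for the smaller one $\Gamma$, after which $\alpha\le\Gamma\le\text{IR}$ forces both to be bounded precisely on $\{\HH\le\{K_{1,n},K_n^*,P_n,C\!K_n\}\}$. For sufficiency let $G$ be connected and $\{K_{1,n},K_n^*,P_n,C\!K_n\}$-free; being $\{K_{1,n},K_n^*,P_n\}$-free gives $\alpha(G)<\alpha_n$ by \cref{duli}. Given any irredundant set $S$, split it into the vertices isolated in $G[S]$ (an independent set, so at most $\alpha_n$ of them) and the set $B$ of vertices with an external private neighbour $p_s\notin S$. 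Privacy makes $s\mapsto p_s$ a perfect matching with no further edges between $B$ and $\{p_s\}$; so if $|B|$ were large, then using $\alpha(G)<\alpha_n$ two applications of Ramsey's theorem extract a clique in $B$ whose matched private neighbours also form a clique, and the induced subgraph on these two cliques is exactly a $C\!K_m$, contradicting $C\!K_n$-freeness once $m\ge n$. Thus every irredundant set, and hence $\text{IR}(G)$, is bounded.

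Part (2) is the heart of the matter; its necessity is covered above, and the difficulty is the sufficiency bound on $i(G)$. Here $\alpha(G)$ need not be bounded (we forbid $K_{1,n}^*$, not $K_{1,n}$), so $i\le\alpha$ is useless, and in general $i$ can dwarf $\gamma$. The plan is to \emph{construct} a small independent dominating set, which is automatically a maximal independent set and so bounds $i$. Since $G$ is $\{K_{1,n}^*,K_n^*,P_n\}$-free, \cref{domination number} supplies a dominating set of size $<\gamma_n$; I would build the independent set greedily, guided by this dominating set, at each round adding one vertex that dominates an as-yet-undominated part of some cluster $N[x]$ and deleting its closed neighbourhood. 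The entire task then reduces to bounding the number of rounds. \textbf{The main obstacle is precisely this round bound}: I expect a case analysis showing that a long run is impossible, because the chosen independent vertices together with the hubs of the dominating set and the private-neighbour structure they are forced to create must, depending on how those neighbours attach, contain an induced $K_n^*$ (pendants private to single hubs of a hub-clique), a $K_{1,n}^*$ or a long $P_n$ (external pendants on an independent set of leaves, or chains of adjacent pendants), or a $K_{n,n}$ or $B\!S_n$ (shared or twin private neighbourhoods between two hubs). Ruling out each case against one of the five forbidden graphs caps the number of rounds by some $c(n)$, yielding $i(G)\le\gamma_n\cdot c(n)$.
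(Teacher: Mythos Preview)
Your treatment of Part~(1), the necessity directions, and Part~(3) is essentially the paper's argument. In (3) the paper applies Ramsey directly to the whole $\text{IR}$-set $S$ (using $\alpha(G[S])<\alpha_n$ to force a clique $K\cong K_{R(n,n)}$, then Ramsey on the private neighbours of $K$ to obtain $C\!K_n$ or $K_n^*$), whereas you first split off the vertices isolated in $G[S]$; the difference is cosmetic.

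The genuine gap is in the sufficiency direction of Part~(2). Your plan is to build an independent dominating set greedily over the $<\gamma_n$ hubs and to bound the number of rounds by a case analysis that ``should'' produce one of $K_{1,n}^*,K_n^*,P_n,K_{n,n},B\!S_n$. You explicitly flag the round bound as \emph{the main obstacle} and only sketch which forbidden subgraph might appear in which case; none of those implications is actually argued. As stated this is a programme, not a proof, and carrying it out would amount to reproving a known result from scratch.

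The paper avoids this entirely by invoking that known result. It cites the theorem of Zverovich and Zverovich (\cref{BS_n}): if $G$ is $\BB\!\mathcal S_{k-1}^2$-free (where $\BB\!\mathcal S_m^2$ is the family of all graphs obtained from $B\!S_m$ by adding arbitrary edges between the two leaf sets), then $i(G)\le\gamma(G)(k-2)-(k-3)$. The only work the paper does is the translation step (\cref{er fen}): a $\{K_{n,n},B\!S_n\}$-free graph is $\BB\!\mathcal S_{B\!R(n)}^2$-free, because applying bipartite Ramsey (\cref{bipartite Ramesy}) to the two leaf sets of any member of $\BB\!\mathcal S_{B\!R(n)}^2$ yields an induced $K_{n,n}$ or an induced $B\!S_n$. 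Combining this with $\gamma(G)<\gamma_n$ from \cref{domination number} gives $i(G)<\gamma_n\cdot B\!R(n)$ in one line. So the missing idea in your proposal is precisely the $i$--$\gamma$ inequality under the $\BB\!\mathcal S_{k-1}^2$-free hypothesis, which the paper imports rather than proves, together with the bipartite Ramsey reduction that makes the hypothesis available.
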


\subsection{Parameters related to the domination chain}
When we do not want a vertex can serve as the private neighbor of itself, we get an variant of $\text{IR}$-number. 
Given a graph $G$, a subset $S$ of $V(G)$ is an {\em open irredundant set} if every vertex $s$ in $S$ has at least one private neighbor outside of $S$, i.e., $P\!N[s,S]-S\neq \emptyset$. The {\em open irredundance number} $O\!I\!R(G)$ is  defined as 
	$$O\!I\!R(G)=\max\{|S| : S \text{ is an open irredundant set of } G\}.$$

Clearly, $O\!I\!R(G)\le I\!R(G)$ for any graph $G$.
We also determine $B$-$O\!I\!R$ in the following.
\begin{restatable}{thm}{OIR}
\label{OIR}
	$\HH\in B$-$O\!I\!R$  if and only if 
	\(
	\HH\le \{K_{1,n}^*, K_n^*, P_n, C\!K_n, F_n\}
	\)
	for some positive integer $n$.	
\end{restatable}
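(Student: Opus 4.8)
The plan is to prove both directions, with the forward (unboundedness) direction being routine and the converse (boundedness) direction carrying the real content.

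For the \emph{necessity} direction I would argue by contraposition. If $\HH \not\le \{K_{1,n}^*, K_n^*, P_n, C\!K_n, F_n\}$ for every $n$, then for each $n$ at least one of these five graphs is $\HH$-free; by pigeonhole one of the five \emph{types} is $\HH$-free for infinitely many $n$, and since each member embeds as an induced subgraph of every larger member of the same type, that whole one-parameter family is $\HH$-free. All five graphs are connected, so it suffices to exhibit, for each type, an open irredundant set of size at least $n$: for $K_{1,n}^*$ take the leaves, for $K_n^*$ and $C\!K_n$ take one of the two ``sides,'' for $P_n$ take every third vertex, and for $F_n$ take one endpoint from each of the $n$ matching edges. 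In every case each chosen vertex retains a private neighbor (its pendant, or its matched partner) \emph{outside} the set, so $O\!I\!R$ is unbounded on an infinite connected $\HH$-free family, whence $\HH \notin B\text{-}O\!I\!R$.

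For the \emph{sufficiency} direction, by the transitivity observation it is enough to bound $O\!I\!R(G)$ for a connected $\{K_{1,n}^*, K_n^*, P_n, C\!K_n, F_n\}$-free graph $G$ by a function of $n$. Fix an $O\!I\!R$-set $S$ and choose for each $s\in S$ one private neighbor $p(s)\notin S$; a short argument shows $s\mapsto p(s)$ is injective and that the bipartite graph between $S$ and $P=\{p(s)\}$ is \emph{exactly} a perfect matching, so $G[S\cup P]$ decomposes into $G[S]$, $G[P]$, and the matching $s\,p(s)$. I would then apply Ramsey's theorem twice. First, if $G[S]$ contains a large clique $C$, a second application to $\{p(c):c\in C\}$ gives either a large independent set among the private neighbors, producing an induced $K_m^*$, or a large clique among them, producing (together with $C$ and the matching) an induced $C\!K_m$; both are forbidden, so the clique number of $G[S]$ is bounded. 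Hence a large $S$ forces a large independent set $I\subseteq S$, and running Ramsey once more on $\{p(v):v\in I\}$ yields either an induced $K_m^*$ or a large \emph{induced matching} $I'\cup\{p(v):v\in I'\}$. Everything thus reduces to one claim: a connected $\{K_{1,n}^*,K_n^*,P_n,F_n\}$-free graph has induced matching number bounded in terms of $n$.

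This last claim is where I expect the main difficulty, and where $F_n$, $K_{1,n}^*$, and connectivity are all essential — note that the disconnected graph $nK_2$ has unbounded induced matching while avoiding all five forbidden graphs, so connectivity cannot be dropped. My plan is to bound the induced matching number via the domination number. Since $G$ is $\{K_{1,n}^*,K_n^*,P_n\}$-free, \cref{domination number} supplies a dominating set $D$ with $|D|<\gamma_n$. Every edge of any induced matching $M$ has an endpoint in $N[v]$ for some $v\in D$, so it suffices to bound, for fixed $v$, the number of edges of $M$ meeting $N[v]$. Discarding the at most one edge containing $v$, split the remaining edges adjacent to $v$ according to whether $v$ is adjacent to \emph{both} endpoints or to exactly \emph{one}: since $M$ is an induced matching and $v$ dominates these edges, $n$ of the ``both'' type yield an induced $F_n$, while $n$ of the ``one'' type yield an induced $K_{1,n}^*$ (center $v$, leaves the adjacent endpoints, pendants their matched partners). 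As both are forbidden, each $v\in D$ meets fewer than $2n$ edges of $M$, so $|M|<2n\gamma_n$. Feeding this back through the two Ramsey steps bounds $|S|=O\!I\!R(G)$ by a function of $n$. The delicate points to get right are the exact perfect-matching structure of $G[S\cup P]$, verifying that every extracted configuration is genuinely \emph{induced}, and tracking the Ramsey thresholds so that a single global bound on $O\!I\!R(G)$ emerges.
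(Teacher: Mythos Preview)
Your proposal is correct and follows essentially the same route as the paper. Both arguments run Ramsey on the $O\!I\!R$-set $S$ and then on the chosen private neighbors to extract $K_n^*$ or $C\!K_n$ from a clique in $G[S]$, and in the independent case both pass to a dominating vertex $v$ (via \cref{domination number}) and split according to whether $v$ sees one or both endpoints of each matched pair, yielding $K_{1,n}^*$ or $F_n$. The only cosmetic difference is that you package the final step as a standalone lemma (``connected $\{K_{1,n}^*,K_n^*,P_n,F_n\}$-free implies bounded induced matching number''), whereas the paper argues it inline directly on the set $Y'$ of private neighbors; the underlying computation is identical.
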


Arumugam, Favaron, Sudha~\cite{afs} and   Arumugam, Subramanian~\cite{as} introduced the independence saturation number and  the irredundance saturation number.

	Let $G=(V,E)$ be a graph and $v\in V$. 
	Let $$I\!S(v)=\max\{|S|: S \text{ is an independent set of $G$ with $v\in S$}\},$$ 
	and define the {\em independence saturation number} of $G$ as 
	$$I\!S(G)=\min \{I\!S(v):v\in V\}.$$ 
	Let $$I\!R\!S(v)=\max\{|S| : S \text{ is an irredundant set in $G$ with $v\in S$}\},$$ 
	and define the {\em irredundance saturation number} of $G$ as 
	$$I\!R\!S(G)=\min \{I\!R\!S(v):v\in V\}.$$   

Thus $I\!S(v)\le I\!R\!S(v)$ as all independent sets are irredundant. Thus $I\!S(G)\le I\!R\!S(G)$. Since $I\!S(G)$ is the size of some maximal independent set, $i(G)\le I\!S(G)\le\alpha(G)$ and similarly $i\!r(G)\le I\!R\!S(G)\le I\!R(G)$.
We  determine $B$-$I\!S$ and $B$-$I\!R\!S$ as follows.

\begin{restatable}{thm}{IS}
\label{IS}
	$\HH\in B$-$I\!S$ if and only if 
	$$\HH \le \{K_{1,n}^*, K_n^*,P_n, K_{n,n}, B\!S_n^p: 2\le p\le n-3\}$$ 
	for some $n\ge 5$.	
\end{restatable}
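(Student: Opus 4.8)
The plan is to prove the two implications separately, with the forward (necessity) direction being a short computation and the reverse (sufficiency) direction carrying essentially all of the weight. Throughout write $\mathcal F_n:=\{K_{1,n}^*,K_n^*,P_n,K_{n,n},B\!S_n^p:2\le p\le n-3\}$.

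For necessity, suppose $\HH\in\B I\!S$ with witnessing constant $c$, and fix $n\ge\max\{5,2c+2\}$. I would first record the routine observation that every member $F$ of $\mathcal F_n$ is connected and satisfies $I\!S(F)\ge\lfloor(n-1)/2\rfloor\ge c$: for $K_{1,n}^*$, $K_n^*$ and $K_{n,n}$ each vertex lies in a stable set of size $\ge n$; for $P_n$ the minimum of $1+\alpha(P_{i-2})+\alpha(P_{n-i-1})$ over the vertices is about $(n-1)/2$; and for $B\!S_n^p$ the minimum is attained at a center and equals $1+n+\lceil(p-3)/2\rceil\ge n+1$. Since each such $F$ is connected with $I\!S(F)\ge c$, no $F$ can be $\HH$-free; i.e.\ every member of $\mathcal F_n$ contains an induced member of $\HH$, which is exactly $\HH\le\mathcal F_n$.

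For sufficiency, by transitivity of $\le$ it suffices to show that every connected $\mathcal F_n$-free graph $G$ has $I\!S(G)<c(n)$ for some constant $c(n)$, and I would argue by contradiction, assuming $I\!S(G)\ge N$ for a large $N=N(n)$ fixed at the end. Since $\{K_{1,n}^*,K_n^*,P_n\}\subseteq\mathcal F_n$ and (using $B\!S_n=B\!S_n^2$ and $n\ge5$) $\{K_{1,n}^*,K_n^*,P_n,K_{n,n},B\!S_n\}\subseteq\mathcal F_n$, \cref{domination number} and \cref{THM: main}(2) give $\gamma(G)<\gamma_n$ and $i(G)<i_n$, where $i_n$ is the corresponding bound for $i$. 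I would then extract two large independent ``stars.'' Taking a minimum dominating set $D$ ($|D|<\gamma_n$) and using $\alpha(G)\le\sum_{d\in D}\max\{1,\alpha(G[N(d)])\}$ (a stable set meets each $N[d]$ in either $\{d\}$ or a stable subset of $N(d)$) together with $\alpha(G)\ge I\!S(G)\ge N$, I obtain a vertex $d_1$ with a stable set $A_1\subseteq N(d_1)$ of size $\ge N/\gamma_n$. Since $I\!S(d_1)\ge N$ there is a stable set of size $\ge N-1$ in $V\setminus N[d_1]$; dominating it again by $D$ and averaging yields a vertex $d_2\ne d_1$ with a stable set $A_2\subseteq N(d_2)$ of size $\ge(N-1)/\gamma_n-1$ and, crucially, $A_2\cap N[d_1]=\varnothing$.

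Next I would clean this configuration into a forbidden subgraph. Because $G$ is $K_{n,n}$-free and $A_1,A_2$ are stable, bipartite Ramsey applied to the bipartite graph between $A_1$ and $A_2$ (both large once $N$ is large) forces large subsets $A_1'\subseteq A_1$, $A_2'\subseteq A_2$ with no edges between them, since otherwise an induced $K_{n,n}$ appears. As $G$ is connected and $P_n$-free, any shortest $d_1$--$d_2$ path $Q=q_1\cdots q_p$ is induced with $p\le n-1$. Deleting from $A_1'$ the leaves adjacent to any of $q_2,\dots,q_p$ and from $A_2'$ those adjacent to any of $q_1,\dots,q_{p-1}$, and starting with $|A_1'|,|A_2'|$ huge, I would keep $\ge n$ clean leaves on each side and then read off the dichotomy: if $p\le n-3$ the surviving configuration is an induced $B\!S_n^p$, while if $p\in\{n-2,n-1\}$ a single clean leaf at each end extends $Q$ to an induced $P_{p+2}\supseteq P_n$. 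Either outcome contradicts $\mathcal F_n$-freeness, so $I\!S(G)<N=:c(n)$. The hard part will be exactly this cleanup, namely the case where some vertex of $Q$ (or the center $d_2$) is adjacent to $\ge n$ leaves of a star, which cannot be discarded outright; I expect to resolve it by a separate case analysis that either shortens $Q$ (decreasing $p$ and re-entering the bistar/path dichotomy), extracts an induced $K_{n,n}$ from common neighbourhoods, or invokes $K_{1,n}^*$- and $K_n^*$-freeness to limit how leaves attach, all while retaining $\ge n$ surviving leaves so that $N(n)$ is an explicit (large) function of $\gamma_n$, $i_n$, and the relevant bipartite Ramsey number.
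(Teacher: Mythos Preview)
Your necessity argument is fine and matches the paper's. The sufficiency argument, however, has a genuine gap precisely where you flag it, and your proposed fix does not obviously go through.

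The problem is the following. You obtain $d_1$ by averaging over a dominating set, not by any extremal choice. After bipartite Ramsey you have large stable $A_1'\subseteq N(d_1)$ and $A_2'\subseteq N(d_2)$ with no edges between them and with $A_2'\cap N[d_1]=\varnothing$. But nothing prevents $d_2$ from being adjacent to all of $A_1'$ (take already $p=2$, i.e.\ $d_1d_2\in E$). In that situation no forbidden subgraph is visible: $G[\{d_1,d_2\}\cup A_1'\cup A_2']$ is just a ``book'' (two vertices complete to a stable set) plus a private stable fan at $d_2$, and none of $K_{1,n}^*$, $K_n^*$, $K_{n,n}$, $B\!S_n^p$, or a long induced path appears. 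There is also no shorter path to pass to, so the ``shorten $Q$'' branch of your plan is empty here. The same obstruction arises when an internal vertex $q_i$ of $Q$ is adjacent to essentially all of $A_1'$, so the deletion step can wipe out one side entirely. Invoking $K_{1,n}^*$- or $K_n^*$-freeness does not help directly in this configuration.

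The paper resolves exactly this difficulty by an extremal choice together with K\"onig's theorem and Lozin's induced-matching lemma (\cref{induced matching}). It takes $u$ to be a vertex with \emph{maximum} local stability number $\alpha_l(u)=\alpha(G[N(u)])$, and lets $U\subseteq N(u)$ be a maximum stable set there; the second center $v$ is then found by averaging as you do. If $v$ sees part of $U$, write $U_1=U\cap N(v)$, $U_2=U\setminus U_1$. Either the bipartite graph $G[U_1,V]$ has matching number $\ge q(n)$, whence \cref{induced matching} produces an induced $nK_2$ which, together with $u$, yields $K_{1,n}^*$; or $\nu(G[U_1,V])<q(n)$, and then K\"onig plus the maximality $\alpha_l(v)\le\alpha_l(u)$ forces $|U_2|\ge |V|-q(n)\ge B\!R(n)$, so enough clean leaves survive on the $u$-side. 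The remaining short cases ($uv\in E$, or the path goes through a vertex of $U$) are then handled via \cref{er fen} applied to the resulting $\BB\!\mathcal S_{B\!R(n)}^p$ configuration. Two further points worth noting: the paper takes the shortest path between the \emph{sets} $\{u\}\cup U$ and $\{v\}\cup V$ rather than between $u$ and $v$, which gives tighter control over adjacencies of internal vertices to the leaf sets; and your bound $i(G)<i_n$ from \cref{THM: main}(2) is never actually used in your argument and can be dropped.
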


\begin{restatable}{thm}{IRS}
\label{IRS}
	$\HH\in B$-$I\!R\!S$ if and only if 
	\[\HH \le \{K_{1,n}^*, K_n^*,P_n, K_{n,n}, C\!K_n, B\!S_n^p: 2\le p\le n-3\}\]
	for some $n\ge 5$.	
\end{restatable}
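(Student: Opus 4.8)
For the \emph{sufficiency} (``if'') direction, by transitivity of $\le$ it suffices to show that the family $\mathcal F_n=\{K_{1,n}^*, K_n^*, P_n, K_{n,n}, C\!K_n, B\!S_n^p : 2\le p\le n-3\}$ itself lies in $B$-$I\!R\!S$, i.e.\ that every connected $\mathcal F_n$-free graph $G$ has $I\!R\!S(G)$ bounded by a constant depending only on $n$. The key starting point is that $\mathcal F_n$ contains the entire $B$-$I\!S$ family of \cref{IS}, so \cref{IS} supplies a constant $c_1=c_1(n)$ with $I\!S(G)<c_1$. Writing $\overline N[v]=V(G)\setminus N[v]$, the identity $I\!S(v)=1+\alpha(G[\overline N[v]])$ then shows that the vertex $v$ realising $I\!S(G)=I\!S(v)$ has $\alpha(G[\overline N[v]])<c_1$; this $v$ will be the minimiser witnessing the bound on $I\!R\!S(G)=\min_u I\!R\!S(u)$.

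I would bound $|S|$ for an arbitrary irredundant set $S\ni v$, which bounds $I\!R\!S(v)\ge I\!R\!S(G)$. Split $S=S_0\cup S_1$, where $S_0$ consists of the vertices isolated in $G[S]$ and $S_1=S\setminus S_0$. Since $S_0$ is independent and $v$ has no neighbour in $S_0$, the set $S_0\cup\{v\}$ is independent, so $|S_0|\le I\!S(v)<c_1$. For each $s\in S_1$ choose a private neighbour $p_s\in P\!N[s,S]$; as $s$ is not isolated, $p_s\ne s$, $p_s\notin S$, and $p_s$ is adjacent to exactly one vertex of $S$, namely $s$. The $p_s$ are pairwise distinct, and for $s\ne v$ the membership $v\in S\setminus\{s\}$ forces $p_s\notin N[v]$, so $Q:=\{p_s : s\in S_1\setminus\{v\}\}\subseteq\overline N[v]$ and hence $\alpha(G[Q])<c_1$. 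It remains to bound the clique number $\omega(G[Q])$: if $G[Q]$ had a clique on $R(n,n)$ vertices, then Ramsey's theorem~\cite{r} applied to the induced subgraph on the corresponding $R(n,n)$ owners produces an $n$-clique or an $n$-independent set among them, and together with the matched private neighbours and the ``exactly one neighbour in $S$'' property this yields an induced $C\!K_n$ (clique case) or an induced $K_n^*$ (independent case), contradicting $\mathcal F_n$-freeness. Thus $\omega(G[Q])<R(n,n)$, and as $\alpha(G[Q])<c_1$, Ramsey's theorem gives $|Q|<R(R(n,n),c_1)$. Consequently $|S|=|S_0|+|S_1|\le |S_0|+|Q|+1<c_1+R(R(n,n),c_1)+1$, a bound independent of $S$, as required.

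For the \emph{necessity} (``only if'') direction I would argue by contraposition: suppose $\HH\not\le\mathcal F_n$ for every $n\ge5$. Then for each such $n$ some $F_n\in\mathcal F_n$ is $\HH$-free, and each $F_n$ is connected. As $\mathcal F_n$ has only six ``shapes'', the pigeonhole principle yields one shape realised by $F_n$ for infinitely many $n$. A direct computation gives $I\!R\!S(F_n)\to\infty$ along this subsequence for every shape: the $n$ pendants of $K_{1,n}^*$ and of $K_n^*$, one side of $K_{n,n}$, one clique of $C\!K_n$ with its matched private neighbours, and the alternate vertices of $P_n$ each lie in an irredundant set of size $\Omega(n)$; and for $B\!S_n^p$ every vertex, the two centres being the worst case, lies in an irredundant set of size at least $n+1$ for all admissible $p$. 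Hence there exist connected $\HH$-free graphs of unbounded $I\!R\!S$, so $\HH\notin B$-$I\!R\!S$.

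The \emph{main difficulty} is the clique-bounding step of the sufficiency: one must check that the private-neighbour configuration lying over a large clique of $Q$ is a genuinely \emph{induced} $K_n^*$ or $C\!K_n$, which is exactly where the ``each $p_s$ sees only $s$ in $S$'' property and the Ramsey dichotomy on the owners enter, and it is this step that forces $C\!K_n$—absent from the $B$-$I\!S$ list—into the forbidden family. A secondary, more routine point is verifying the $I\!R\!S$ lower bounds in the necessity, the only non-immediate case being $B\!S_n^p$, where the minimising vertices are the two centres rather than the leaves and one must confirm their $I\!R\!S$ still grows with $n$ uniformly over $2\le p\le n-3$.
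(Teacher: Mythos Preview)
Your proof is correct and follows the paper's route: invoke \cref{IS} to obtain a vertex $v$ with $I\!S(v)<I\!S_n$, split an irredundant $S\ni v$ into its isolated and non-isolated parts in $G[S]$, bound the former via $I\!S(v)$, and apply Ramsey to the external private neighbours of the latter to force an induced $K_n^*$ or $C\!K_n$ from a large clique there. Your treatment of the independent-set branch is in fact slightly cleaner than the paper's: the paper disposes of a large independent set in $X'$ by a second Ramsey step on the owners followed by an explicit $K_{1,n}^*$ extraction, whereas your observation that $Q\subseteq V(G)\setminus N_G[v]$ yields $\alpha(G[Q])\le I\!S(v)-1$ immediately, so that branch never arises.
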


The rest of the paper is arranged as follows. In Section 2, we prove Theorem~\ref{THM: main}. We give the proofs of Theorem \ref{OIR}, \ref{IS} and \ref{IRS} in Section 3.

\section{Proof of Theorem~\ref{THM: main}}
We begin by giving some preliminaries. Notably, there exists a quality bound between the lower irredundance number ($ir(G)$) and the domination number ($\gamma(G)$).
\begin{prop}[\cite{hhs}]\label{PROP: ir}
For any graph $G$, $ir(G)\le \gamma(G)\le 2ir(G)-1$.	
\end{prop}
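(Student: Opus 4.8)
The left inequality $ir(G)\le\gamma(G)$ is already the first link of the domination chain in \cref{domination chain}: every minimal dominating set is a maximal irredundant set, so minimising cardinality over the larger family of maximal irredundant sets can only lower the optimum. Thus the whole content is the upper bound $\gamma(G)\le 2\,ir(G)-1$, and the plan is to convert a minimum maximal irredundant set into a dominating set while paying at most one extra vertex per element, and then to shave off a single vertex.

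Concretely, I would let $S$ be a maximal irredundant set with $|S|=ir(G)$ and set $Y=V\setminus N[S]$, the vertices that $S$ fails to dominate. First I record a private neighbour for each $s\in S$ and note that any private neighbour of $s$ lying in $N(y)$ for some $y\in Y$ must be external, since $s\in N(y)$ would force $y\in N[S]$. The key structural step exploits maximality: for every $y\in Y$ the set $S\cup\{y\}$ is not irredundant, yet $y$ is its own private neighbour there (because $y\notin N[S]$); hence adding $y$ must annihilate the private neighbourhood of some $s=s(y)\in S$, which forces $P\!N[s(y),S]\subseteq N(y)$. In particular every private neighbour of $s(y)$ dominates $y$. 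Choosing one external private neighbour $p_s$ for each blamed vertex $s$ in $S_Y=\{s(y):y\in Y\}$, the set $D=S\cup\{p_s:s\in S_Y\}$ dominates $V$; the two parts are disjoint (private neighbourhoods of distinct vertices are disjoint and lie outside $S$), so $|D|=|S|+|S_Y|\le 2\,ir(G)$.

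It then remains to save one vertex. If $Y=\varnothing$ then $S$ already dominates and $\gamma(G)\le ir(G)\le 2\,ir(G)-1$; and if some vertex of $S$ is its own private neighbour, I may take that vertex as its own representative, so that $S$ and the representative set overlap and again $|D|\le 2\,ir(G)-1$. The main obstacle is the residual case in which $Y\ne\varnothing$ and no vertex of $S$ is self-private, equivalently $G[S]$ has no isolated vertex. Here a local swap around a blamed vertex $s_0=s(y_0)$ is delicate: one wants to delete $s_0$ (which stays dominated by its $S$-neighbour) and cover $P\!N[s_0,S]$ by $y_0$, but the representative $p_{s_0}$ is still required to dominate the whole \emph{blame fibre} $Y_{s_0}=\{y\in Y: s(y)=s_0\}$, so a naive swap breaks even rather than gaining a vertex. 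Resolving this case is the crux; I would either push the fibre analysis through — handling directly the subcase where some fibre or some private neighbourhood is a singleton, and bounding $|S_Y|\le|S|-1$ otherwise — or simply invoke the classical amalgamation argument of Bollobás and Cockayne as recorded in \cite{hhs} for this final sharp reduction.
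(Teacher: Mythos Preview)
The paper does not prove this proposition at all; it is quoted from \cite{hhs} as a preliminary fact, with no argument supplied. So there is no proof in the paper to compare your sketch against.

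Your outline is the standard annihilation argument and it is correct up to the weak bound $\gamma(G)\le 2\,ir(G)$: maximality of an $ir$-set $S$ forces every undominated $y$ to annihilate some $s(y)\in S$, and adjoining one external private neighbour per annihilated vertex yields a dominating set of size $|S|+|S_Y|$. Two small points to tighten. First, in the case ``some $s\in S$ is its own private neighbour'' the saving is not because $S$ and the representative set overlap, but simply because such an isolated $s$ cannot be annihilated, so $|S_Y|\le|S|-1$ directly. Second, the genuine residual case is $S_Y=S$, which is strictly stronger than ``$G[S]$ has no isolated vertex''; as written, your trichotomy leaves out the easy case where $G[S]$ has no isolated vertex yet some $s\in S$ is never annihilated.

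For the sharp ``$-1$'' in the residual case $S_Y=S$ you do not actually carry out the fibre analysis and instead invoke the Bollob\'as--Cockayne argument from \cite{hhs}. Since that is precisely what the paper itself does (by citation), your proposal is already more detailed than the paper's own treatment; but if you want a self-contained proof you will still need to supply that final step.
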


The following lemma has appeared in~\cite{alr}.
\begin{lem}[\cite{alr}] \label{bipartite Ramesy}
For any positive integer $n$, there exists a smallest integer $B\!R(n)$ such that every bipartite graph $G=(V_{1},V_{2}, E)$ with $|V_i|\ge B\!R(n)$ for $i= 1,2$, there are  subsets $U_i\subseteq V_i$ of order $n$ 
satisfying that 
$G[U_1,U_2]\cong K_{n,n}$ or $E_{2n}$.
\end{lem}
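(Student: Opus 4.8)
The plan is to reinterpret the statement as the classical bipartite Ramsey theorem and then prove it by an iterative pigeonhole argument. First I would recast the bipartite graph $G=(V_1,V_2,E)$ as a $2$-colouring of the pairs between the two sides: colour a pair $(u,v)$ with $u\in V_1$, $v\in V_2$ \emph{red} if $uv\in E$ and \emph{blue} if $uv\notin E$. Under this dictionary, a pair of subsets $(U_1,U_2)$ with $G[U_1,U_2]\cong K_{n,n}$ is exactly a red monochromatic $K_{n,n}$, while $G[U_1,U_2]\cong E_{2n}$ is exactly a blue one (there being no edges inside $U_1$ or inside $U_2$ because $G$ is bipartite). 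So the lemma reduces to showing that, for $|V_1|,|V_2|$ large enough, some monochromatic $K_{n,n}$ must appear.

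For each $a\in V_1$ write $f(a)=N_G(a)\subseteq V_2$ for its set of red neighbours. I would build the two sides greedily, processing the vertices of $V_1$ one at a time while maintaining a shrinking \emph{candidate set} $C\subseteq V_2$. Start with $C=V_2$. Given the current $C$ and an unused vertex $a\in V_1$, the sets $C\cap f(a)$ and $C\setminus f(a)$ partition $C$, so one of them has size at least $|C|/2$; replace $C$ by this larger part and record the colour (red or blue) of $a$ towards the new $C$. Because $C$ only ever shrinks, every previously processed vertex stays monochromatic towards the new $C$ with its recorded colour. After processing $t$ vertices $a_1,\dots,a_t$ we therefore have $|C|\ge |V_2|/2^{t}$, and each $a_i$ is monochromatic towards $C$.

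Taking $t=2n-1$, a pigeonhole on the recorded colours yields $n$ of the $a_i$, say with common colour red (the blue case being symmetric), all of which are red to the whole of $C$. Provided $|C|\ge n$, i.e. $|V_2|\ge n\,2^{2n-1}$, I may take any $n$ vertices of $C$ as $U_2$ and these selected $a_i$ as $U_1$ to obtain a red $K_{n,n}$ (respectively a blue one, giving $E_{2n}$). Thus $|V_1|\ge 2n-1$ and $|V_2|\ge n\,2^{2n-1}$ suffice; since $n\,2^{2n-1}\ge 2n-1$, demanding $|V_1|,|V_2|\ge n\,2^{2n-1}$ is enough, and because the set of integers with the stated property is then nonempty and bounded below, a smallest such integer $B\!R(n)$ exists. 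The argument is essentially routine; the only point requiring care is the observation that keeping the larger half preserves the monochromaticity of all earlier-chosen vertices, which is precisely what lets the single shared candidate set $C$ serve all of $U_1$ simultaneously.
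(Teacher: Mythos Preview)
Your argument is correct: the iterative halving of the candidate set $C\subseteq V_2$ while processing $2n-1$ vertices of $V_1$ is the standard way to prove the bipartite Ramsey theorem, and the invariant you isolate (that every previously processed $a_i$ remains monochromatic towards the shrinking $C$) is exactly what makes it work. The bound $BR(n)\le n\cdot 2^{2n-1}$ you obtain is fine, and the existence of a smallest such integer follows.

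There is no proof in the paper to compare against: the lemma is quoted from \cite{alr} and used as a black box. So your proposal supplies what the paper omits. One cosmetic remark: the statement as written in the paper asks for $G[U_1,U_2]\cong E_{2n}$, which implicitly uses the bipartiteness of $G$ to guarantee no edges inside $U_1$ or $U_2$; you note this in passing, and it is worth keeping explicit.
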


Let $\BB\!\mathcal S_n^p$ be the family of graphs obtained by adding additional edges connecting the leaves adjacent to one center and the leaves adjacent to anther center  of $B\!S_n^p$.
The following result was given by Zverovich and Zverovich in~\cite{zz}. 
\begin{thm}[\cite{zz}] \label{BS_n}
If a graph $G$ is $\BB\!\mathcal S_{k-1}^2$-free, where $k\ge 3$, then  \[
 i(G)\le \gamma(G)(k-2)-(k-3).
\]
\end{thm}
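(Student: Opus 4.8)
The plan is to induct on $\gamma(G)$, using a reformulation of $\BB\!\mathcal S_{k-1}^2$-freeness as a purely local condition on edges. First I would prove the following reformulation, which is the only place the forbidden family enters: $G$ is $\BB\!\mathcal S_{k-1}^2$-free if and only if for every edge $uv\in E(G)$ one has $\alpha(G[N(u)\setminus N[v]])\le k-2$ or $\alpha(G[N(v)\setminus N[u]])\le k-2$. Indeed, every member of $\BB\!\mathcal S_{k-1}^2$ consists of two adjacent centers $u,v$, a size-$(k-1)$ independent set $A\subseteq N(u)\setminus N[v]$, a size-$(k-1)$ independent set $B\subseteq N(v)\setminus N[u]$, and an arbitrary set of edges running between $A$ and $B$; conversely, given an edge $uv$ with independent sets $A,B$ of size $k-1$ on the two exclusive sides, the set $\{u,v\}\cup A\cup B$ induces precisely such a member. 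Thus having both sides of some edge with independence number $\ge k-1$ is equivalent to containing an induced $\BB\!\mathcal S_{k-1}^2$; I only need the forward direction, namely that being $\BB\!\mathcal S_{k-1}^2$-free forces a ``small side'' $\alpha\le k-2$ at every edge.

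For the induction, the base case $\gamma(G)=1$ gives $i(G)=1=(k-2)\cdot 0+1$, since a single dominating vertex is by itself a maximal independent set. For the step, let $\gamma(G)=t\ge 2$ and fix a minimum (hence minimal) dominating set $D$, so $P\!N[v,D]\ne\emptyset$ for every $v\in D$. The key is to peel off \emph{exactly one} dominator using a small independent set. I would choose a dominator $v_a$ and an independent set $S\subseteq P\!N[v_a,D]$ with $|S|\le k-2$ that dominates $G[P\!N[v_a,D]]$, as follows: if $G[D]$ has an isolated vertex $v_a$, take $S=\{v_a\}$, which dominates $P\!N[v_a,D]\subseteq N[v_a]$; otherwise $G[D]$ has an edge $v_av_b$, and after relabelling via the reformulation we may assume $\alpha(G[N(v_a)\setminus N[v_b]])\le k-2$, so a maximal independent set $S$ of $G[P\!N[v_a,D]]$ satisfies $|S|\le\alpha(G[P\!N[v_a,D]])\le\alpha(G[N(v_a)\setminus N[v_b]])\le k-2$, using that $P\!N[v_a,D]\subseteq N(v_a)\setminus N[v_b]$ (because $v_b\in D\setminus\{v_a\}$ is adjacent to $v_a$).

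The crucial point, namely why $\gamma$ drops by one, is that $S\subseteq P\!N[v_a,D]$ removes only the single dominator $v_a$: for each $s\in S$ the vertex $s$ is a private neighbor of $v_a$, so $N[s]$ contains no dominator other than $v_a$, whence $N[S]\cap D=\{v_a\}$. Setting $G'=G-N[S]$, the set $D\setminus\{v_a\}$ survives inside $G'$ and dominates it: any vertex of $G'$ dominated in $G$ only by $v_a$ would lie in $P\!N[v_a,D]\subseteq N[S]$ and hence be absent from $G'$, so every remaining vertex keeps a dominator in $D\setminus\{v_a\}$, and each such dominator lies in $V(G')$ since $N[S]\cap D=\{v_a\}$. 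Thus $\gamma(G')\le t-1$. As $G'$ is an induced subgraph it is again $\BB\!\mathcal S_{k-1}^2$-free, so by induction it has a maximal independent set $I'$ with $|I'|\le(k-2)(\gamma(G')-1)+1\le(k-2)(t-2)+1$ (when $\gamma(G')=0$ simply take $I'=\emptyset$). Finally $I=S\cup I'$ is independent, since $I'\subseteq V\setminus N[S]$, and dominating, since $N[S]$ is covered by $S$ and the rest by $I'$; hence $I$ is a maximal independent set of $G$, giving $i(G)\le|S|+|I'|\le(k-2)+(k-2)(t-2)+1=(k-2)(t-1)+1=\gamma(G)(k-2)-(k-3)$.

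The main obstacle in this argument, and the reason a naive ``delete $N[v_a]$'' reduction fails, is guaranteeing that removing the peeled set lowers $\gamma$ by one rather than leaving it unchanged: deleting a full closed neighborhood can wipe out several other dominators and orphan their territories, so that no dominating set of size $t-1$ is forced to survive. Confining $S$ to the private neighborhood $P\!N[v_a,D]$ is exactly what circumvents this, because private neighbors carry no dominator in their closed neighborhoods apart from $v_a$ itself, so no replacement vertices are ever required. Two minor points remain routine: the reformulation lemma must be checked against the precise construction of $\BB\!\mathcal S_{k-1}^2$ (in particular that the added edges run only between the two leaf-classes and never from a leaf to the opposite center, which is what makes the exclusive-neighborhood containments $A\subseteq N(u)\setminus N[v]$ and $B\subseteq N(v)\setminus N[u]$ hold), and the degenerate situations $\gamma(G')=0$ and the empty or disconnected graph are absorbed without harming the bound because $k\ge 3$ keeps the correction term $k-3$ nonnegative.
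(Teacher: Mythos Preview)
The paper does not prove this theorem; it is quoted from Zverovich and Zverovich~\cite{zz} and invoked as a black box in the proof of Theorem~\ref{THM: main}(2), so there is no in-paper argument to compare against.

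Your proof is correct and self-contained. The reformulation of $\BB\!\mathcal S_{k-1}^2$-freeness as the local edge condition is accurate given the paper's definition of the family (extra edges run only between the two leaf-classes, so the containments $A\subseteq N(u)\setminus N[v]$ and $B\subseteq N(v)\setminus N[u]$ hold as you observe). The inductive step is sound: because $D$ is a minimum, hence minimal, dominating set, $P\!N[v_a,D]\neq\emptyset$, so the set $S$ is nonempty; since $S\subseteq P\!N[v_a,D]$ you indeed get $N[S]\cap D=\{v_a\}$, which is exactly what forces $\gamma(G')\le t-1$ and makes $D\setminus\{v_a\}$ a legitimate dominating set of $G'$. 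The union $S\cup I'$ is then an independent dominating set of $G$, hence a maximal independent set, and the arithmetic $(k-2)+(k-2)(t-2)+1=(k-2)(t-1)+1=\gamma(G)(k-2)-(k-3)$ closes the induction. The degenerate case $\gamma(G')=0$ is harmless since $t\ge 2$ keeps $(k-2)(t-2)+1\ge 0$.
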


\begin{lem}\label{er fen}
$\{K_{n,n}, B\!S_n^p\}\le \BB\!\mathcal S_{B\!R(n)}^p$.	
\end{lem}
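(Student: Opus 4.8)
The plan is to prove the inequality straight from its definition, by taking an arbitrary member $H$ of the family $\BB\!\mathcal S_{B\!R(n)}^p$ and exhibiting a copy of either $K_{n,n}$ or $B\!S_n^p$ as an induced subgraph of $H$. Recall that $H$ is built from $B\!S_{B\!R(n)}^p$, so it has two centers $c_1,c_2$ that are the two ends of a path $P=P_p$, together with leaf sets $L_1$ (the $B\!R(n)$ leaves attached to $c_1$) and $L_2$ (the $B\!R(n)$ leaves attached to $c_2$); the only edges of $H$ not already present in $B\!S_{B\!R(n)}^p$ are some edges joining vertices of $L_1$ to vertices of $L_2$. I first record the structural facts I will use repeatedly: $L_1$ and $L_2$ are each independent in $H$, each leaf of $L_1$ is adjacent to $c_1$ but to no other vertex of $P$, and the only possible neighbors of a leaf of $L_1$ outside $\{c_1\}$ lie in $L_2$ (via the added edges); symmetrically for $L_2$.

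The key step is to form the auxiliary bipartite graph $G'=(L_1,L_2,E')$, where $E'$ is precisely the set of added leaf-to-leaf edges of $H$. Since $|L_1|=|L_2|=B\!R(n)$, \cref{bipartite Ramesy} applies and yields subsets $U_1\subseteq L_1$ and $U_2\subseteq L_2$, each of order $n$, such that $G'[U_1,U_2]\cong K_{n,n}$ or $G'[U_1,U_2]\cong E_{2n}$. The argument then splits into these two cases.

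In the first case I would examine $H[U_1\cup U_2]$: because $U_1$ and $U_2$ are each independent and the only edges between them are those of $E'$, which here form a complete bipartite graph, this induced subgraph is exactly $K_{n,n}$, so $K_{n,n}\prec H$. In the second case I would instead take $H[U_1\cup U_2\cup V(P)]$, keeping the whole path (both centers and the $p-2$ internal vertices). Here $c_1$ is adjacent to all of $U_1$, $c_2$ to all of $U_2$, the path $P$ joins $c_1$ to $c_2$, there are no edges inside $U_1$ or inside $U_2$, there are no edges between $U_1$ and $U_2$ (this is exactly the $E_{2n}$ outcome), and no leaf meets a path vertex other than its own center. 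Hence this induced subgraph is precisely $B\!S_n^p$, giving $B\!S_n^p\prec H$. Since $H$ was an arbitrary member of $\BB\!\mathcal S_{B\!R(n)}^p$, every graph of this family contains a member of $\{K_{n,n},B\!S_n^p\}$ as an induced subgraph, which is exactly the assertion $\{K_{n,n},B\!S_n^p\}\le\BB\!\mathcal S_{B\!R(n)}^p$.

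The only genuine content is the appeal to \cref{bipartite Ramesy}; the two case analyses are routine. The one point deserving care is verifying that the chosen subgraphs are \emph{induced} exactly as claimed—that the $E_{2n}$ outcome leaves no stray adjacency between $U_1$ and $U_2$ (or between a leaf and a path vertex) to spoil the copy of $B\!S_n^p$, and that the $K_{n,n}$ outcome introduces no edge inside $U_1$ or inside $U_2$. Both follow from the recorded fact that the leaf sets are independent and leaves touch the path only at their own centers, so I expect no serious obstacle.
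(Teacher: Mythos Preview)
Your proposal is correct and follows essentially the same argument as the paper: apply \cref{bipartite Ramesy} to the bipartite graph between the two leaf sets of size $B\!R(n)$, and in the $K_{n,n}$ outcome take the leaves alone while in the $E_{2n}$ outcome take the leaves together with the path to obtain an induced $B\!S_n^p$. Your write-up is somewhat more explicit about why the subgraphs are induced, but the strategy is identical.
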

\begin{proof}
Denote by $V_i \, (i=1,2)$ the set of leaves adjacent to the corresponding two centers of $B\!S_{B\!R(n)}^p$, respectively. Then $|V_1|=|V_2|=B\!R(n)$. Recall that any graph $G\in \BB\!\mathcal S_{B\!R(n)}^p$ is obtained  by adding some edges between $V_1$ and $V_2$ from $B\!S_{B\!R(n)}^p$.
Applying \cref{bipartite Ramesy} to the bipartite graph $G[V_1,V_2]$, we have $U_i\subseteq V_i$ with $|U_i|=n$ for $i=1,2$ such that 
 $G[U_1, U_2]\cong K_{n,n}$ or $G[U_1, U_2]$ is an empty graph (thus $B\!S_n^p\prec G$ in this case). Therefore, $\{K_{n,n}, B\!S_n^p\}\le \BB\!\mathcal S_{B\!R(n)}^p$.	
\end{proof}

Now we are ready to prove Theorem~\ref{THM: main}.

\begin{proof}[Proof of Theorem~\ref{THM: main}:]
(1)	It is a corollary of \cref{domination number} and Proposition~\ref{PROP: ir}.

(2) We first prove the ``only if'' part. Suppose that there is a constant $c$ such that every $\HH$-free graph $G$ satisfies $i(G)< c$. Let $n=3c-2$. Then  $i(K_{1,n}^*)=i(K_n^*)=i(K_{n,n})=n$, $i(P_n)=c$,  and $i(B\!S_n)=n+1$. This implies that $K_{1,n}^*, K_n^*, P_n, K_{n,n}$, and $B\!S_n$  are not $\HH$-free. Therefore, $\HH \le \{K_{1,n}^*, K_n^*, P_n, K_{n,n}, B\!S_n\}$.

Next we prove the ``if'' part. Now suppose $\HH \le \{K_{1,n}^*, K_n^*, P_n, K_{n,n}, BS_n\}$. Given any connected $\HH$-free graph $G$, we know that $G$ is $\{K_{1,n}^*, K_n^*, P_n, K_{n,n}, B\!S_n\}$-free. Thus $G$ is $\{K_{1,n}^*, K_n^*, P_n\}$-free too. By \cref{domination number}, $\gamma(G)<\gamma_n$.  By \cref{er fen}, we also have that $G$ is $\BB\!\mathcal S_{B\!R(n)}^2$-free.
According to \cref{BS_n}, 
$$i(G)\le \gamma(G)(B\!R(n)-1)-(B\!R(n)-2)<\gamma_n B\!R(n).$$

(3) Recall that we need to prove the following statements are equivalent (a) $\HH\in B$-$\text{IR}$, (b) $\HH\in B$-$\Gamma$, and (c) $\HH \le \{K_{1,n}, K_n^*, P_n, CK_n\}$ for some positive integer $n$.

$(a)\Rightarrow (b)$ is clear since $\Gamma(G)\le \text{IR}(G)$. In order to prove $(b)\Rightarrow (c)$, we only need to show that $K_{1,n}, K_n^*, P_n$ and $CK_n$ have unbounded upper domination number $\Gamma$ as $n$ increases. By \cref{duli},  $K_{1,n}, K_n^*$ and $P_n$ have unbounded independence number $\alpha$, so does the upper domination number $\Gamma$. 
The maximum clique of $C\!K_n$ forms a largest minimal dominating set of $C\!K_n$. Therefore, we have $\Gamma(C\!K_n)=n$. 

Now we prove that $(c)\Rightarrow (a)$. We claim that, for any $\{K_{1,n}, K_n^*, P_n, CK_n\}$-free graph $G$, $\text{IR}(G)< R(R(n,n),\alpha_n)$.
Suppose not, then there exists an \text{IR}-set $S$ of $G$ such that $|S|\ge R(R(n,n),\alpha_n)$. Since $\alpha(G[S])\le \alpha(G) <\alpha_n$, there must be a clique $K\cong K_{R(n,n)}$ in $G[S]$. Take a private neighbor for each vertex in $K$, and denote by $K'$ the set of these corresponding private neighbors. Then $|K'|=|V(K)|=R(n,n)$. Hence $G[K']$ contains an induced $K_n$ or $E_n$. If $G[K']$ contains a $K_n$, then $G$ contains an induced $C\!K_n$, a contradiction. If $G[K']$ contains a $E_n$, then $G$ contains an induced $K_n^*$, a contradiction too. The proof is completed.
\end{proof}

\section{Proofs of Theorems \ref{OIR}, \ref{IS} and \ref{IRS}}
We revisit \cref{OIR} as follows.
\OIR*
\begin{proof}
The ``only if'' part can be checked directly from the fact that $O\!I\!R(K_{1,n}^*)=O\!I\!R(K_n^*)=O\!I\!R(C\!K_n)=O\!I\!R(F_n)=n$, and $O\!I\!R(P_n)=\lceil n/3\rceil$. 

Now we prove the ``if'' part. All we need to show is that if a connected graph $G$ satisfies $O\!I\!R(G)\ge R(R(n,n),R(n,(2n-1)\gamma_n)$, then $G$ contains an induced $K_{1,n}^*, K_n^*, P_n, C\!K_n$ or $ F_n$. Take an $O\!I\!R$-set $S$ of $G$, and thus $|S|\ge R(R(n,n),R(n,(2n-1)\gamma_n)$. Note that each element in $S$ has at least one private neighbor outside of $S$. For any $s\in S$, fix one of its private neighbors and denote it as $s'$. For any subset $X$ of $S$, let $X'=\{x':x\in X\}$. 

If $G[S]$ contains a clique $K=K_{R(n,n)}$, then $|K'|= |V(K)|=R(n,n)$. 
If $G[K']$ contains a $K_n$, then $G$ has an induced $C\!K_n$; otherwise, $G[K']$ contains an $E_n$, in this case, $G$ contains an induced $K_n^*$.

Now assume $G[S]$ contains a stable set $E$ of order $R(n,(2n-1)\gamma_n)$. Then $|E'|= |E|=R(n,(2n-1)\gamma_n)$.
If $G[E']$ contains a $K_n$, then $G$ has an induced $K_n^*$ too; otherwise,  $G[E']$ contains a stable set $Y'$ of order $(2n-1)\gamma_n$. 
If $\gamma (G)\ge \gamma_n$, then $G$ contains an induced $K_n^*, P_n$ or $K_{1,n}^*$ by \cref{domination number}. We are done. Thus we may assume $\gamma(G)<\gamma_n$. Then there must be a vertex $v$ in a $\gamma$-set of $G$ that dominates at least $2n-1$ vertices in $Y'$. 
Therefore, we can choose $Z'\subseteq N_G(v)\cap Y'$ with $|Z'|=2n-1$. Let $Z=\{z\in S:z'\in Z'\}$ denote the subset of $S$ corresponding to $Z'$. Then $|Z|=|Z'|=2n-1$ and $v\not \in Z\cup Z'$. If $v$ dominates $n$ vertices in $Z$, then $G$ contains an induced $F_n$. Otherwise,  there are at least $n$ vertices of $Z$ nonadjacent to $v$, thus  $G$ contains an induced $K_{1,n}^*$ in this case.
Therefore, we have that, for any connected $\HH$-free graph $G$,  
$$O\!I\!R(G)< R(R(n,n),R(n,(2n-1)\gamma_n).$$
\end{proof}

Before we prove Theorem \ref{IS} and \ref{IRS}, we first introduce a lemma of Lozin \cite{l17} to bound the matching number $\nu(G)$ of bipartite graphs.

\begin{lem}[\cite{l17}]\label{induced matching}
Let $G$ be an $\{nK_2,K_{n,n}\}$-free bipartite graph. Then there exists a minimum $q(n)$, such that 
$\nu(G)<q(n)$. 
\end{lem}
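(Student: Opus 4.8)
The plan is to bound the size of an arbitrary matching of $G$ by extracting, from a large matching, either an induced $nK_2$ or an induced $K_{n,n}$, contradicting the hypothesis. Write the bipartition as $V=A\cup B$ and let $M=\{u_1v_1,\dots,u_mv_m\}$ be a matching with $u_i\in A$ and $v_i\in B$; I aim to show that $m\ge q(n)$ forces a forbidden induced subgraph, so that in particular $\nu(G)<q(n)$. Since $G$ is bipartite, the only possible edges between two matching edges $u_iv_i$ and $u_jv_j$ are the two cross edges $u_iv_j$ and $u_jv_i$, and this four-state local picture is what I will feed into Ramsey's theorem.

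Fixing the natural linear order $1<2<\cdots<m$ on the indices, I colour each pair $\{i,j\}$ with $i<j$ by one of four colours according to which of $u_iv_j$, $u_jv_i$ are present: colour $A$ (neither), colour $D$ (both), colour $B$ (only $u_iv_j$), colour $C$ (only $u_jv_i$). Applying the four-colour version of Ramsey's theorem, if $m$ is at least the Ramsey number $q(n):=R_4(2n)$ guaranteeing a monochromatic clique of size $2n$, there is an index set $S$ with $|S|=2n$ on which all pairs receive the same colour.

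I then split into cases. In colour $A$, the edges $\{u_iv_i:i\in S\}$ form an induced matching (no cross edges, and each side independent in $G$), giving an induced $nK_2$. In colour $D$, the sets $\{u_i:i\in S'\}$ and $\{v_i:i\in S'\}$ for any $S'\subseteq S$ with $|S'|=n$ are joined by all cross edges together with the matching edges, giving an induced $K_{n,n}$. The colours $B$ and $C$ are the interesting ones: a monochromatic colour-$B$ set produces a \emph{half-graph}, in which (listing the indices of $S$ increasingly) $u_{i_a}$ is adjacent to $v_{i_b}$ exactly when $a\le b$. Here I would take the $u$'s of the first $n$ indices and the $v$'s of the last $n$ indices; every such $u$-index precedes every such $v$-index, so these $2n$ vertices induce a $K_{n,n}$, and colour $C$ is symmetric under reversing the order. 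Since $|S|=2n$ suffices in every case, $G$ contains an induced $nK_2$ or $K_{n,n}$, a contradiction; hence $\nu(G)<q(n)$.

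I expect the main obstacle to be the half-graph cases (colours $B$ and $C$): one must observe that although a half-graph carries a large \emph{non-induced} matching, its largest \emph{induced} matching has size one (if two induced-matching pairs $(u_{a_k},v_{b_k})$ and $(u_{a_l},v_{b_l})$ coexisted, the edge and non-edge constraints would force $a_k>a_l$ and $a_l>a_k$). Consequently the forbidden structure a half-graph supplies is the biclique $K_{n,n}$, not $nK_2$. Recognizing that the ``exactly one cross edge'' colour classes are precisely the obstructions that must be routed through $K_{n,n}$ is the only non-routine step; the reduction to Ramsey's theorem and the treatment of colours $A$ and $D$ are standard.
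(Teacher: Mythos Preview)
Your argument is correct. The paper does not supply its own proof of this lemma; it simply quotes the result from Lozin~\cite{l17}, so there is no in-paper proof to compare against. Your four-colour Ramsey argument on the cross-edge pattern of a large matching is the standard route and is exactly how this is typically proved: colours $A$ and $D$ yield the induced $nK_2$ and $K_{n,n}$ directly, while the two half-graph colours $B$ and $C$ yield an induced $K_{n,n}$ by taking the first $n$ ``$u$''-endpoints against the last $n$ ``$v$''-endpoints (or vice versa). The choice $q(n)=R_4(2n)$ is a valid explicit bound, and the concluding remark that half-graphs have induced matching number one is a nice sanity check but is not needed for the proof itself.
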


\cref{IS} is restated below.
\IS*
\begin{proof}
We first prove the ``only if'' part. 
$I\!S(G)\ge i(G)$ can be arbitrarily large for $G\in\{K_{1,n}^*, K_n^*,P_n, K_{n,n}\}$ by Theorem~\ref{THM: main} (2). For graph $B\!S_n^p$, 
each vertex $v$ together with the leaves adjacent to some center of $B\!S_n^p$ can form a stable set of size $n+1$. Then $I\!S (B\!S_n^p)=n+1$.
Therefore, $\HH \le \{K_{1,n}^*, K_n^*,P_n, K_{n,n}, B\!S_n^p: 2\le p\le n-3\}$ for some $n\ge 5$.	

Next we prove the ``if'' part. Let $N=(\gamma_n-1)(q(n)+2B\!R(n)-1)+2$, where $q(n)$ is defined in~\cref{induced matching} and $B\!R(n)$ is defined in~\cref{bipartite Ramesy}. We show that if a 
connected  graph $G$ satisfies $I\!S(G)\ge N$, then $G$ contains an induced $K_{1,n}^*, K_n^*, P_n,K_{n,n}$ or $ B\!S_n^p$ for some integer $p$. (We do not require $p\le n-3$ as $P_n\prec B\!S_n^p$ if $p>n-3$.)
By \cref{domination number}, we may assume that $G$ has a $\gamma$-set $D$ with $|D|< \gamma_n$, otherwise, $G$ contains an induce $K_{1,n}^*, K_n^*$, or $P_n$, we are done. 

Let $u$ be a vertex of $G$ with maximum local stability number $\alpha_l(u):=\alpha(G[N(u)])$. Choose $U\subset N_G(u)$ be a stable set of order $\alpha_l(u)$. Since $I\!S(u)\ge I\!S(G)\ge N$, we can find a stable set $V'$ of size $N-1$ such that $N_G(u)\cap V'=\emptyset$. Thus there is a vertex $v\in D$ such that $|N_G(v)\cap V'|\ge q(n)+2B\!R(n)$. 
Let $V=N_G(v)\cap V'$. Thus 
\[|U|=\alpha_l(u) \ge \alpha_l(v)\ge |V|\ge q(n)+2B\!R(n).\] 
Note that $N_G(u)\cap V=\emptyset$ and thus $u\not =v$. 

\noindent{\bf Case 1:}  $N_G(v)\cap (U\cup \{u\})\neq \emptyset$.\\
Let $U_1=N_G(v)\cap U$ and $U_2=U-U_1$.

\begin{claim}\label{CL: U2}
$|U_2|\ge B\!R(n)$.	
\end{claim}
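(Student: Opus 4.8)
The plan is to prove the claim in contrapositive form, exploiting the fact that we are currently hunting for a forbidden induced subgraph: I assume $|U_2|<B\!R(n)$ and show that this assumption already forces $G$ to contain one of the graphs $K_{n,n}$ or $K_{1,n}^*$, which completes the whole ``if'' part. Thus the claim really reads ``either $|U_2|\ge B\!R(n)$, or we are done'', and we may continue under $|U_2|\ge B\!R(n)$. Note that since $|U|\ge q(n)+2B\!R(n)$, the assumption $|U_2|<B\!R(n)$ makes $U_1=N_G(v)\cap U$ large: $|U_1|=|U|-|U_2|>q(n)+B\!R(n)$.

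The key step is to extract a large matching from the bipartite graph $H:=G[U_1,V]$. Both $U_1$ and $V$ are stable and disjoint (as $U_1\subseteq N_G(u)$ while $V\cap N_G(u)=\emptyset$), so $H=G[U_1\cup V]$ is an induced bipartite subgraph of $G$, and crucially $U_1\cup V\subseteq N_G(v)$. By König's theorem $H$ contains an independent set of size $|U_1|+|V|-\nu(H)$; this set is stable in $G$ and lies inside $N_G(v)$, hence has size at most $\alpha_l(v)$. Combining this with the maximality $\alpha_l(v)\le\alpha_l(u)=|U|$ yields $|U_1|+|V|-\nu(H)\le|U|$, i.e. $\nu(H)\ge|V|-|U_2|>|V|-B\!R(n)\ge q(n)+B\!R(n)$, so in particular $\nu(H)\ge q(n)$. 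Now \cref{induced matching} applies in contrapositive form: since $H$ is bipartite with matching number at least $q(n)$, it cannot be $\{nK_2,K_{n,n}\}$-free, so $H$ contains an induced $K_{n,n}$ or an induced $nK_2$.

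Finally I translate each outcome into a forbidden induced subgraph of $G$. An induced $K_{n,n}$ in $H$ is already an induced $K_{n,n}$ in $G$, and we are done. For an induced matching $\{a_ib_i:1\le i\le n\}$ with $a_i\in U_1$ and $b_i\in V$, I adjoin the vertex $u$: since $a_i\in U\subseteq N_G(u)$ while each $b_i\in V$ is nonadjacent to $u$, the vertex $u$ is joined to every $a_i$ and to no $b_i$. Using that $\{a_i\}$ and $\{b_i\}$ are each stable and that $a_i\sim b_j$ only when $i=j$, the induced graph $G[\{u\}\cup\{a_i\}\cup\{b_i\}]$ is a star centered at $u$ with leaves $a_i$, each carrying the private pendant $b_i$; that is exactly an induced $K_{1,n}^*$.

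The step I expect to be delicate is this last translation. The tempting move is to keep $v$ together with the induced $nK_2$, but $v$ is adjacent to all the $a_i$ and all the $b_i$, so that only produces an induced $F_n$, and $F_n$ is \emph{not} on the forbidden list for $I\!S$ (indeed $I\!S(F_n)=1$, so finding it is useless). The fix that saves the argument is to discard $v$ and promote $u$ to the star center: $u$ sees the $U_1$-side of the matching but misses the $V$-side, and this asymmetry is precisely what turns the matching edges into the pendant edges of $K_{1,n}^*$ rather than the spokes of an $F_n$. Once $u$ is chosen as the center, checking the required non-adjacencies is routine from the induced-matching property and the stability of $U$ and $V$.
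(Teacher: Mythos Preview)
Your argument is correct and is essentially the same as the paper's: both apply K\"onig's theorem to $G[U_1,V]$, combine $\alpha(G[U_1,V])\le\alpha_l(v)\le\alpha_l(u)=|U|$ to bound the matching number against $|U_2|$, and then invoke \cref{induced matching} to produce an induced $K_{n,n}$ or an induced $nK_2$ (the latter yielding $K_{1,n}^*$ via the vertex $u$). The only difference is organizational --- you argue by contraposition (assume $|U_2|<B\!R(n)$ and force $\nu\ge q(n)$), whereas the paper first disposes of the case $\nu\ge q(n)$ and then derives $|U_2|>|V|-q(n)\ge B\!R(n)$ directly; your aside about why $u$ rather than $v$ must be the star center is a helpful clarification that the paper leaves implicit.
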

\begin{proof}
Apply K\"onig Theorem to the bipartite graph $G_1=G[U_1,V]$, we have 
\[\alpha(G_1)+\nu(G_1)=|U_1|+|V|.\]
If $\nu(G_1)\ge q(n)$, then, by~\cref{induced matching}, $nK_2\prec G[U_1,V]$.  Thus the $nK_2$ in $G[U_1,V]$ together with $u$ forms an induced $K_{1,n}^*$ in $G$, we are done. Therefore, we may assume 
$\nu(G_1)<q(n)$. 
Since $\alpha_l(u)$ is maximum, we have 
\[
0\ge \alpha_l(v)-\alpha_l(u)\ge \alpha(G[U_1, V])-|U|> (|U_1|+|V|-q(n))-(|U_1|+|U_2|). 
\]
Therefore, $|U_2|> |V|-q(n)\ge B\!R(n)$. 
\end{proof}

If $uv\in E(G)$, then some graph in $\BB\!\mathcal S_{B\!R(n)}^2$ appears in $G[\{u,v\}\cup U_2\cup V]$. By~\cref{er fen}, $G$ contains an induced $K_{n,n}$ or $B\!S_n$. 
Now suppose $uv\notin E(G)$. If there exists a vertex $p\in U$ satisfying $pv\in E(G)$ and $|N_G(p)\cap V|\ge B\!R(n)$, then $\BB\!\mathcal S_{B\!R(n)}^2$ appears in $G[\{u,p\}\cup (U\setminus\{p\}) \cup (N_G(p)\cap V)]$.
Thus for any $p\in U$ with $pv\in E(G)$, we have $|V\setminus N_G(p)|= |V|-|N_G(p)\cap V|\ge B\!R(n)$. Then $\BB\!\mathcal S_{B\!R(n)}^3$ appears in $G[\{u,p,v\}\cup U_2 \cup (V\setminus N_G(p))]$.
In each case, we are done  from~\cref{er fen}.

\vspace{5pt}

\noindent{\bf Case 2.} $N_G(v)\cap ( U\cup \{u\})=\emptyset$. 

If there exist $p\in U$ and $q\in V$ satisfying $pq\in E(G)$, then 
$\max(|N_G(p)\cap V|, |N_G(q)\cap U|)<B\!R(n)$, otherwise,  $\BB\!\mathcal S_{B\!R_n}^2$ appears in 
$$G[\{u,p\}\cup (U\setminus\{p\}) \cup (N_G(p)\cap V)] \text{ or } G[\{v,q\}\cup (N_G(q)\cap U)\cup (V\setminus\{q\})].$$
Thus $\BB \mathcal S_{B\!R(n)}^4$ appears in $G[\{u,p,q,v\}\cup (U-N_G(q))\cup (V-N_G(p))]$ as 
$$\min(|U-N_G(q)|,|(V-N_G(p)|)\ge BR(n).$$
Therefore,  we are done  from~\cref{er fen} in each case.

Finally assume $E_G[\{u\}\cup U, \{v\}\cup V]=\emptyset$.
Let $P=x_1\dots x_p$ be a shortest path connecting $\{u\}\cup U$ and $\{v\}\cup V$ such that $N_G(x_1)\cap (\{u\}\cup U)\not =\emptyset$ and $N_G(x_p)\cap (\{v\}\cup V)\not =\emptyset$. It is well-defined as $E_G[\{u\}\cup U, \{v\}\cup V]=\emptyset$.
If $\max\{|U\setminus N_G(x_1)|, |V\setminus N_G(x_p)|\}< n$, then $x_1\not =x_p$, otherwise, we have $\alpha_l(x_1)>\alpha_l(u)$, a contradiction to the assumption of $u$.
Thus we can get an induced $B\!S_n^p$ from $\{x_1,\dots ,x_p\}\cup (N_G(x_1)\cap U)\cup (N_G(x_p)\cap V)$. By symmetry, we assume $|U\setminus N_G(x_1)|\ge n$. If $x_1u\in E(G)$, then one can find a bistar $B\!S_n^q$ ($q\ge p+1$) with one center $u$ and its leaves coming from $U\setminus N_G(x_1)$. Otherwise, choose $x_0\in U$ with $x_1x_0\in E(G)$. We also have a bistar $B\!S_n^q$ ($q\ge p+2$) with one center $u$ and its leaves coming from $U\setminus (N_G(x_1)\cup\{x_0\})$. (The another center of the $B\!S_n^q$ will be  $x_p$ or $v$ according to whether $|V\setminus N_G(x_p)|<n$.) 

\end{proof}

Denote by $I\!S_n$ the smallest constant depending only on $n$ in \cref{IS}, i.e., $I\!S(G)<I\!S_n$ for any $\{K_{1,n}^*, K_n^*,P_n, K_{n,n}, B\!S_n^p: 2\le p\le n-3\}$-free graph $G$.
Now we restate \cref{IRS} as follows.

\IRS*

\begin{proof}
We first prove the ``only if'' part. Recall that $I\!R\!S(G)\ge I\!S(G)$ for any graph $G$. Thus, by~\cref{IS},  $I\!R\!S(G)$ is unbounded for any $G\in \{K_{1,n}^*, K_n^*,P_n, K_{n,n}, B\!S_n^p: 2\le p\le n-3\}$ as $n$ increases.
Note that each vertex in $C\!K_n$ belongs to an irredundant set of order $n$. Thus $I\!R\!S(C\!K_n)=n$. Therefore, $\HH \le \{K_{1,n}^*, K_n^*,P_n, K_{n,n}, C\!K_n, B\!S_n^p: 2\le p\le n-3\}$ for some $n\ge 5$.	

Next we prove the ``if'' part. All we need to show is that if a connected graph $G$ satisfies $I\!R\!S(G)\ge I\!S_n+R(R(n,n),R(n,I\!S_n+n))$, then $G$ contains an induced $K_{1,n}^*, K_n^*, P_n, K_{n,n}$, $C\!K_n$, or $B\!S_n^p$ for some positive integer $p$. By \cref{IS}, we may assume that there is a vertex $v\in V(G)$ such that $I\!S(v)=I\!S(G)< I\!S_n$. 

Let $S$ be an $I\!R\!S(v)$-set. Thus $v\in S$. Let $L$ be the set of isolated vertices in $G[S]$. Then $L\cup \{v\}$ is a stable set. Thus $|L\cup \{v\}|\le I\!S(v)<I\!S_n$. Let $X=S\setminus (L\cup\{v\})$.
Then every vertex in $X$ has at least one private neighbor outside of $S$. For any $x\in X$, fix one of its private neighbor as $x'$.
Let $X'=\{x':x\in X\}$. 
Then $$|X'|= |X|=|S|-|L\cup \{v\}| \ge I\!R\!S(G)-I\!S_n\ge R(R(n,n),R(n,I\!S_n+n)).$$

If $G[X']$ contains a clique $K'$ of order $R(n,n)$, then $G$ contains an induced $K_n^*$ or $C\!K_n$ by applying Ramsey's Theorem to $G[K]$, where $K=\{x\in X:x'\in K'\}$. 
Now we assume that $G[X']$ contains a stable set $Y'$ of order $R(n,I\!S_n+n)$. Let $Y=\{x\in X: x'\in Y'\}$. Applying Ramsey's Theorem to $G[Y]$, we have $G[Y]$ contains either a $K_n$ or a stable set $Z$ of order $I\!S_n+n$.    
The former case implies that $G$ contains an induced $K_n^*$. For the latter, since $I\!S(v)<I\!S_n$, $|N_G(v)\cap Z|\ge n$. Therefore, $G$ contains an induced $K_{1,n}^*$ in this case.
\end{proof}

\noindent{\bf Data Availability}\\
 Data sharing not applicable to this article as no datasets were generated or analysed
during the current study.

\noindent{\bf Conflict of interest:} No conflict of interest exits in the submission of this manuscript, and manuscript is
approved by all authors for publication. I would like to declare on behalf of my co-authors that the work
described was original research that has not been published previously, and not under consideration for
publication elsewhere, in whole or in part. All the authors listed have approved the manuscript that is
enclosed.

\noindent{\bf Acknowledgments:} This work was supported by the National Key Research and Development Program of China (2023YFA1010200), the National Natural Science Foundation of China (No. 12071453),  and the Innovation Program for Quantum Science and Technology (2021ZD0302902).

\end{document}